\newcommand{\bs}{\boldsymbol}
\def\uphi{\undertilde{\varphi}}
\def\upsi{\undertilde{\psi}}
\def\uH{\undertilde{H}}
\def\rot{{\rm rot}}
\def\dv{{\rm div}}
\def\vgm12{\bs{V}^{1+,2}_{\gamma,M}}
\newtheorem{theorem}{Theorem}
\newtheorem{remark}[theorem]{Remark}
\newtheorem{lemma}[theorem]{Lemma}
\begin{document}

\title[A multi-level scheme of transmission eigenvalue problem]{A multi-level mixed element scheme of the two dimensional Helmholtz transmission eigenvalue problem}

\author{Yingxia Xi}
\address{LSEC, Institute of Computational Mathematics and Scientific/Engineering Computing, Academy of Mathematics and System Sciences, Chinese Academy of Sciences, Beijing 100190, People's Republic of China}
\email{yxiaxi@lsec.cc.ac.cn}

\author{Xia Ji}
\address{LSEC, Institute of Computational Mathematics and Scientific/Engineering Computing, Academy of Mathematics and System Sciences, Chinese Academy of Sciences, Beijing 100190, People's Republic of China}
\email{jixia@lsec.cc.ac.cn}
\thanks{X. Ji is supported by the National Natural Science Foundation of China (No. 11271018, No. 91630313) and National Centre for Mathematics and Interdisciplinary Sciences, Chinese Academy of Sciences.}

\author{Shuo Zhang}
\address{LSEC, Institute of Computational Mathematics and Scientific/Engineering Computing, Academy of Mathematics and System Sciences, Chinese Academy of Sciences, Beijing 100190, People's Republic of China}
\email{szhang@lsec.cc.ac.cn}
\thanks{S. Zhang is partially supported by the National Natural Science Foundation of China with Grant No. 11471026 and National Centre for Mathematics and Interdisciplinary Sciences, Chinese Academy of Sciences.}

\subjclass[2000]{65N25,65N30,47B07}

\keywords{Mixed finite element method; transmission eigenvalue; multi-level scheme.}

\begin{abstract}
% Body of abstract:
{In this paper, we present a multi-level mixed element scheme for the Helmholtz transmission eigenvalue problem on polygonal domains that are not necessarily able to be covered by rectangle grids. We first construct an equivalent linear mixed formulation of the transmission eigenvalue problem and then discretize it with Lagrangian finite elements of low regularities. The proposed scheme admits a natural nested discretization, based on which we construct a multi-level scheme. Optimal convergence rate and optimal computational cost can be obtained with the scheme.}
\end{abstract}

\maketitle

\section{Introduction}

In this paper, we study the numerical method of the Helmholtz transmission eigenvalue problem in two dimension. For the scattering of time-harmonic acoustic waves by a bounded simply connected inhomogeneous medium $\Omega \subset \mathcal{R}^2$, the transmission eigenvalue
problem is to find $k\in\mathcal{C}$, $\phi, \varphi\in H^2(\Omega)$ such that
\begin{equation}\label{Problem1}
\left\{
\begin{array}{rcll}
\Delta \phi+k^2n(x)\phi&=&0,&{\rm in}\ \Omega,\\
\Delta \varphi+k^2\varphi&=&0, &{\rm in}\ \Omega,\\
\phi-\varphi&=&0,&{\rm on}\ \partial \Omega,\\
\frac{\partial \phi}{\partial \nu}-\frac{\partial \varphi}{\partial\nu}&=&0,&{\rm on}\ \partial \Omega,
\end{array}
\right.
\end{equation}
where $\nu$ is the unit outward normal to the boundary $\partial \Omega$. The index of refraction $n(x)$ is assumed to be positive. 
%For practical materials, generally $n(x)>1$. 
Values of $k$ such that there exists a nontrivial solution to (\ref{Problem1}) are called transmission eigenvalues.

The transmission eigenvalue problem is arising in inverse scattering theory \cite{CakoniColtonMonkSun,CakoniHaddar,ColtonKress,ColtonMonkSun,ColtonPaivarintaSykvester,Sun2016}. Since the transmission eigenvalues can be determined from the far field pattern \cite{Colton1988}, they can be used to obtain estimates for the material properties of the scattering object \cite{CakoniCayorenColton}. Furthermore, transmission eigenvalues have theoretical importance in the uniqueness and reconstruction in inverse scattering theory \cite{ColtonKress}. The problem thus has been attracting wide interests on the mathematical and numerical analysis.

Contrast to some existing model problems, the Helmholtz transmission eigenvalue problem is non-self-adjoint, and thus all classical theoretical tools can not be applied directly. It can be proved that the transmission eigenvalues form at most a discrete set with infinity as the only possible accumulation point by applying the analytic Fredholm theory \cite{ColtonPaivarintaSykvester}. However, little was known about the existence of the transmission eigenvalues except some special cases. In \cite{PaivarintaSykvester}, P\"{a}iv\"{a}rinta and Sylvester show the existence of a finite number of transmission eigenvalues provided that the index of refraction is large enough. Cakoni and Haddar \cite{CakoniHaddar2009} extend the idea of \cite{PaivarintaSykvester} and prove the existence of finitely many transmission eigenvalues for a larger class of problems. The idea is further extended to show the existence of an infinite discrete set of transmission eigenvalues that accumulate at infinity \cite{Cakoni2012}.

Besides, there is an infinite-dimensional eigenspace corresponding to the nonphysical transmission eigenvalue k = 0. Actually, it is readily seen that
any harmonic function on $\Omega$ is an eigenfunction by setting $k = 0$ in  (\ref{Problem1}) such that the first equation and the second equation become the same. Following \cite{JiSunTurner}, we define
$
V:=H_0^2(\Omega)=\Big\{u\in H^2(\Omega):\ u=0\ {\rm and}\ \frac{\partial u}{\partial{\nu}}=0\ {\rm on}\
 \partial\Omega\Big\}
$  and introduce a new variable $u=\phi-\varphi\in V$, and then $u$ and $k$ satisfy the fourth order problem
\begin{eqnarray}\label{Fourth_Order_Eigenvalue_Problem1}
\big(\Delta +k^2n(x)\big)\frac{1}{n(x)-1}(\Delta +k^2)u=0.
\end{eqnarray}
It is obvious that $k = 0$ is not a nontrivial eigenvalue of the eigenvalue problem \eqref{Fourth_Order_Eigenvalue_Problem1} any longer. The nonlinear eigenvalue problem \eqref{Fourth_Order_Eigenvalue_Problem1} is then a physically consistent formulation. %, 

As \eqref{Fourth_Order_Eigenvalue_Problem1} falls into the category of fourth order problems, the conforming Argyris element method, proposed by \cite{ColtonMonkSun}, is a natural approach. The BFS element was analogously discussed for rectangular grids in \cite{JiSunXie,YangHanBi2}. Due to the high compliancy of the conforming elements, the nonconforming Morley element method was studied in \cite{JiXiXie}. Further, the discontinuous Galerkin (DG) method, such as the $C^0$-IPG method using standard $C^0$ Lagrange finite elements was also applied on the transmission eigenvalue problem \cite{GengJiSunXu}. A mixed element method was discussed in \cite{JiSunTurner}. For this method, only $C^0$ finite elements are required. Some methods other than finite element methods were also reported, such as the recursive integral method (RIM) proposed in
\cite{HuangStruthersSunZhang,XiJi}. The related source problem \cite{HsialLiuSunXu, WuChen} and other multi-level type methods \cite{JiSun2013JCP} have also been discussed.

In this paper, we present a multi-level mixed element method of \eqref{Fourth_Order_Eigenvalue_Problem1}. As well known, the multi-level algorithm based on nested essence has been a key tool in the fields of computational mathematics and scientific computing. For eigenvalue problems, many multi-level algorithms have been designed and implemented. A type of multi-level scheme is presented by Lin-Xie \cite{LinXie,Xie}. The method is related to \cite{Lin.Q1979,XuZhou2001,xu1992new,xu1994novel}, and has presented a framework of designing multi-level schemes which works well for the elliptic eigenvalue problem and stable saddle point problem, provided a series of subproblems with intrinsic nestedness constructed. For the fourth order problem in primal formulations where the second order Sobolev spaces are involved, the discretizations can hardly be nested. The only known nested finite element other than spline type ones is the BFS element on rectangular grids, and its multi-level algorithm has been discussed for \eqref{Fourth_Order_Eigenvalue_Problem1} by \cite{JiSunXie,YangHanBi2}, but no results are known on triangular grids. In this paper, we will first construct a mixed element method for \eqref{Fourth_Order_Eigenvalue_Problem1}. The newly constructed mixed formulation employs Sobolev spaces of zeroth and first orders only such that nested hierarchy can be naturally expected, and then we implement Lin-Xie's framework onto the formulation to construct a multi-level algorithm on triangular grids. Optimal accuracy and optimal computational cost can be obtained.

We remark that for the mixed element method for \eqref{Fourth_Order_Eigenvalue_Problem1} presented in \cite{JiSunTurner}, it remains open whether and how this method is equivalent to the primal formulation \eqref{Fourth_Order_Eigenvalue_Problem1}, especially on non-convex domains. Moreover, the generated schemes is not topologically nested. In our present method, the order reduced formulation is equivalent to the primal formulation, and the generated schemes are topologically nested, and thus Lin-Xie's framework can be utilised.

The remaining of the paper is organized as follows. In Section 2, we collect some necessary preliminaries. In Section 3, we present a mixed formulation of the transmission eigenvalue problem. The equivalence between the linear order reduced formulation and the primal formulation is proved. Section 4 constructs the discretization scheme and a multi-level algorithm follows. Numerical examples are then given in Section 5 with the discussion about complex eigenvalues. Finally, in Section 6 some concluding remarks are given.

\section{Preliminaries}
\subsection{Transmission eigenvalue problem}

For the scattering of time-harmonic acoustic waves by a bounded simply connected inhomogeneous medium $\Omega \subset \mathcal{R}^2$, the transmission eigenvalue
problem is to find $k\in\mathcal{C}$, $\phi, \varphi\in H^2(\Omega)$ such that
\begin{equation*}%\label{Problem1}
\left\{
\begin{array}{rcll}
\Delta \phi+k^2n(x)\phi&=&0,&{\rm in}\ \Omega,\\
\Delta \varphi+k^2\varphi&=&0, &{\rm in}\ \Omega,\\
\phi-\varphi&=&0,&{\rm on}\ \partial \Omega,\\
\frac{\partial \phi}{\partial \nu}-\frac{\partial \varphi}{\partial\nu}&=&0,&{\rm on}\ \partial \Omega,
\end{array}
\right.
\end{equation*}
where $\nu$ is the unit outward normal to the boundary $\partial \Omega$.   Following the same procedure in
\cite{JiSunTurner}, introducing a new variable $u=\phi-\varphi\in V$, $u$ satisfies
\begin{equation*}
(\triangle + k^2)u = k^2(1-n(x))\phi,\quad\mbox{namely}\quad \frac{1}{1-n(x)}(\triangle + k^2)u=k^2 \phi.
\end{equation*}
We apply $(\triangle +k^2n(x))$ to both sides of the above equation to obtain
%{\color{red}Ji Xia: please add derivation of \eqref{Fourth_Order_Eigenvalue_Problem}}
\begin{equation*}%\label{Fourth_Order_Eigenvalue_Problem}
\big(\Delta +k^2n(x)\big)\frac{1}{n(x)-1}(\Delta +k^2)u=0.
\end{equation*}
Note that $k = 0$ is not a nontrivial eigenvalue any longer, since $\displaystyle (\frac{1}{n(x)-1}\Delta u, \Delta u)=0$ and $u \in V$ implies that $u = 0$. On the other hand, it's easy to see the equivalence by setting $\phi=\displaystyle\frac{1}{k^2(1-n(x))}(\triangle + k^2)u$ and $\varphi=\phi-u$.

The variational formulation of the transmission eigenvalue problem is to find $(k^2\neq 0,u)\in\mathbb{C}\times V$, such that
\begin{equation}\label{eq:vftep}
\left(\frac{1}{n(x)-1}(\Delta u+k^2u),\Delta v+k^2n(x)v\right)=0,\ \ \forall\, v\in V.
\end{equation}
Here $0<n_s\leqslant n(x)\leqslant n_b$.
%{\color{red} is the eigenvalue problem well-posed?}

\subsection{Fundamental results of spectral approximation of compact operators}

In this subsection, we present some fundamental results on the spectral approximation of compact operators. They can be found from \cite{Babuska.I;Osborn.J1991}.

First of all, we introduce the symbol $\eqslantless$ to denote an order of complex numbers.  Let $\mathsf{c}_k=\rho_ke^{i\theta_k}$, $k=1,2$, be two complex numbers, with $\rho_k\geqslant 0$ and $0\leqslant \theta_k<2\pi$. Then $\mathsf{c}_1\eqslantless\mathsf{c}_2$ if and only if one of the items below holds:
\begin{enumerate}
%\item $\mathbb{c}_1=0$ and $\mathbb{c}_2\neq0$;
\item $\rho_1=\rho_2=0$;
\item $\rho_1<\rho_2$;
\item $\rho_1=\rho_2\neq0$, and $\theta_1\geqslant \theta_2$.
\end{enumerate}
It is evident that if $\mathsf{c}_1\eqslantless\mathsf{c}_2$ and $\mathsf{c}_2\eqslantless\mathsf{c}_3$, then $\mathsf{c}_1\eqslantless\mathsf{c}_3$.

Coherently, we use the symbol $``\eqslantgtr"$, whereas $\mathsf{c}_2\eqslantgtr\mathsf{c}_1$ if and only if $\mathsf{c}_1\eqslantless\mathsf{c}_2$.

\begin{lemma}
(\cite{Chang.K;Lin.Y1990})
Let $T$ be a compact operator on the Banach space $X$, then all its eigenvalues, counting multiplicity, can be listed in a (finite or infinite) sequence as
\begin{equation}\label{eq:ordered}
\mu_1\eqslantgtr\mu_2\eqslantgtr\dots\eqslantgtr0.
\end{equation}
\end{lemma}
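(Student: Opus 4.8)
The plan is to build the enumeration ``shell by shell'' on top of the classical Riesz--Schauder structure theory for compact operators, which we take as known: since $T$ is compact on the Banach space $X$, the spectrum $\sigma(T)$ is at most countable, every nonzero point of $\sigma(T)$ is an eigenvalue of finite algebraic multiplicity, and $0$ is the only possible accumulation point of $\sigma(T)$; equivalently, for each $\varepsilon>0$ the set $\{\mu\in\sigma(T):|\mu|\geqslant\varepsilon\}$ is finite. All the content of the lemma is the observation that the order ``$\eqslantgtr$'' is compatible with this structure, so no analytic input beyond Riesz--Schauder is needed.

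First I would check that ``$\eqslantless$'' is a genuine total order on $\mathbb{C}$: antisymmetry follows immediately from the three defining clauses, totality is clear, and transitivity is already recorded in the text; thus ``sorting'' the eigenvalues is a meaningful operation. Next, for a fixed nonzero $\mu\in\sigma(T)$ I would bound the number of eigenvalues that are $\eqslantgtr\mu$: by clause (2) these are the finitely many eigenvalues with modulus strictly larger than $|\mu|$, together with, by clause (3), those eigenvalues of modulus exactly $|\mu|$ whose argument is $\leqslant\arg\mu$; the latter form a subset of $\{\mu'\in\sigma(T):|\mu'|=|\mu|\}$, which is finite because $|\mu|>0$. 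Hence every nonzero eigenvalue has only finitely many eigenvalues above it in the order ``$\eqslantgtr$''.

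Consequently the nonzero eigenvalues, listed with their (finite) algebraic multiplicities, form a countable subset of the totally ordered set $(\mathbb{C},\eqslantless)$ in which every element has a finite ``upper section'', and any such set can be written as a finite or infinite $\eqslantgtr$-nonincreasing sequence $\mu_1\eqslantgtr\mu_2\eqslantgtr\cdots$. Concretely, I would arrange the distinct moduli of the nonzero eigenvalues as a strictly decreasing sequence $r_1>r_2>\cdots$, which either terminates or tends to $0$ by Riesz--Schauder; within each modulus-$r_j$ shell list the finitely many eigenvalues by nondecreasing argument, repeated according to multiplicity; and concatenate the blocks in the order $r_1,r_2,\dots$. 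Monotonicity within a block is clause (3), while monotonicity across blocks is clause (2) together with transitivity. Finally, every entry satisfies $\mu_k\eqslantgtr0$ (clause (2) when $\mu_k\neq0$, clause (1) when $0$ is appended), and if the sequence is infinite then $|\mu_k|\to0$, i.e.\ $\mu_k\to0$; should $0$ itself be an eigenvalue, it is placed at the end of the list (or as its limit) with its multiplicity, consistently with the notation $\mu_1\eqslantgtr\mu_2\eqslantgtr\cdots\eqslantgtr0$. I do not expect a genuine obstacle here: the argument is pure bookkeeping over the Riesz--Schauder theorem, the only points deserving attention being the ``counting multiplicity'' clause and the legitimacy of ordering within a shell of equal modulus, both of which reduce to finiteness statements furnished by compactness.
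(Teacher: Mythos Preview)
Your argument is correct. The paper does not supply its own proof of this lemma; it is quoted from the cited reference \cite{Chang.K;Lin.Y1990} as a known fact, so there is no in-paper proof to compare against. Your derivation from the Riesz--Schauder structure theorem is the standard one and handles all the points the statement requires: totality of the order ``$\eqslantless$'', finiteness of the upper section of each nonzero eigenvalue (via the finiteness of $\{\mu\in\sigma(T):|\mu|\geqslant\varepsilon\}$ for every $\varepsilon>0$), and the shell-by-shell concatenation.

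One small remark on the last sentence: when $0$ is an eigenvalue its algebraic multiplicity may well be infinite (e.g., a finite-rank $T$ on an infinite-dimensional $X$), so ``placed at the end of the list with its multiplicity'' cannot be taken literally. The trailing ``$\eqslantgtr 0$'' in the paper's display is best read as recording that every complex number is $\eqslantgtr 0$ and that $0$ is the only possible limit of the sequence, rather than as an instruction to append $0$ with multiplicity; your enumeration of the \emph{nonzero} eigenvalues is exactly what is needed.
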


\begin{lemma}
Let $\{T_h\}$ be a family of compact operators on $X$, such that $\|T_h-T\|_{X\to X}\to 0$ as $h$ tends to zero. List the eigenvalues of $T_h$ in a sequence as
\begin{equation}
\mu_1^h\eqslantgtr\mu_2^h\eqslantgtr\dots\eqslantgtr0.
\end{equation}
Then $\lim_{h\to 0}\mu_i^h=\mu_i$ for any $i$.
\end{lemma}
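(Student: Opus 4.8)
The plan is to derive this from the classical spectral-approximation theory for compact operators under operator-norm convergence (as recorded in \cite{Babuska.I;Osborn.J1991}), the mechanism being the Riesz functional calculus together with one elementary fact: if $\|T_h-T\|_{X\to X}\to0$, then for every compact $K\subset\mathbb{C}$ with $K\cap\sigma(T)=\emptyset$ there is an $h_K>0$ such that for $h<h_K$ one has $K\cap\sigma(T_h)=\emptyset$ and $(zI-T_h)^{-1}\to(zI-T)^{-1}$ uniformly on $K$. This follows from the Neumann expansion $(zI-T_h)^{-1}=(zI-T)^{-1}\bigl(I-(T_h-T)(zI-T)^{-1}\bigr)^{-1}$, convergent once $\|T_h-T\|_{X\to X}\cdot\sup_{z\in K}\|(zI-T)^{-1}\|<1$. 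Hence, for any circle $\Gamma\subset\rho(T)$, the spectral projections $E_h(\Gamma)=\tfrac{1}{2\pi i}\oint_\Gamma(zI-T_h)^{-1}\,dz$ converge in norm to $E(\Gamma)$, so $\mathrm{rank}\,E_h(\Gamma)=\mathrm{rank}\,E(\Gamma)$ for $h$ small; that is, the eigenvalues of $T_h$ enclosed by $\Gamma$ have the same total algebraic multiplicity as those of $T$.

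Fix $i$ and first suppose $\mu_i\neq0$; put $r=|\mu_i|>0$. As $0$ is the only accumulation point of $\sigma(T)$, only finitely many eigenvalues of $T$ (with multiplicity) have modulus $\geqslant r$; let $\lambda_1,\dots,\lambda_p$ be the distinct ones, of algebraic multiplicities $m_1,\dots,m_p$, and $M=\sum_j m_j$. Since in the order $\eqslantgtr$ larger modulus comes first, $\mu_1,\dots,\mu_M$ is exactly this set arranged by $\eqslantgtr$, so $\mu_i\in\{\lambda_1,\dots,\lambda_p\}$ and $i\leqslant M$. Take $\epsilon>0$ sufficiently small that the closed disks $\bar D_j=\{|z-\lambda_j|\leqslant\epsilon\}$ are pairwise disjoint, exclude $0$, and each meets $\sigma(T)$ only at its centre; fix $R>\|T\|+1$ and a radius $\delta\in(0,r-\epsilon)$ such that $T$ has no eigenvalue of modulus in $[\delta,r)$ (possible since $\sigma(T)$ has $0$ as its only accumulation point), so that $K=\{\delta\leqslant|z|\leqslant R\}\setminus\bigcup_j\{|z-\lambda_j|<\epsilon\}$ is compact and disjoint from $\sigma(T)$. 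For $h$ small the facts above give: each $\partial D_j\subset\rho(T_h)$ with $\mathrm{rank}\,E_h(\partial D_j)=\mathrm{rank}\,E(\partial D_j)=m_j$, so $T_h$ has exactly $m_j$ eigenvalues (with multiplicity) in $D_j$, all within $\epsilon$ of $\lambda_j$; and $K\subset\rho(T_h)$ while $\sigma(T_h)\subset\{|z|\leqslant\|T_h\|\}\subset\{|z|<R\}$, so every eigenvalue of $T_h$ of modulus $\geqslant\delta$ lies in some $D_j$. Since a point of $D_j$ has modulus exceeding $r-\epsilon>\delta$, these $M$ disk-eigenvalues strictly dominate the remaining eigenvalues of $T_h$ in modulus, hence are precisely $\mu_1^h,\dots,\mu_M^h$. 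If $\mu_i$ is the only eigenvalue of $T$ of modulus $r$ --- say of algebraic multiplicity $m$ --- then $\bar D(\mu_i,\epsilon)$ is the disk of smallest centre-modulus among $\bar D_1,\dots,\bar D_p$, so it carries the $m$ smallest-modulus disk-eigenvalues, which occupy the same $m$ positions in $\mu_1^h,\dots,\mu_M^h$ as the $m$ copies of $\mu_i$ do in $\mu_1,\dots,\mu_M$; therefore $\mu_i^h\in\bar D(\mu_i,\epsilon)$, i.e.\ $|\mu_i^h-\mu_i|\leqslant\epsilon$, and letting $\epsilon\to0$ yields $\mu_i^h\to\mu_i$. (In general, the partition of $\mu_1^h,\dots,\mu_M^h$ by the disks matches, block by block, the partition of $\mu_1,\dots,\mu_M$ by modulus.)

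If instead $\mu_i=0$, then $T$ has some number $N_0<i$ of nonzero eigenvalues (with multiplicity); enclosing each in a tiny disk, the same compactness argument shows that for every $\eta>0$ all eigenvalues of $T_h$ outside these disks have modulus $<\eta$ for $h$ small, while the disks carry exactly $N_0<i$ of the ordered eigenvalues of $T_h$, so $|\mu_i^h|<\eta$ and $\mu_i^h\to0=\mu_i$. The single step that is not a routine consequence of the uniform resolvent convergence is the position-matching, and I expect it to be the main obstacle: when $T$ has distinct eigenvalues $\lambda_j,\lambda_k$ of a common modulus, the order $\eqslantgtr$ separates them by their argument, yet an arbitrary perturbation may reverse the relative order of the moduli of the two approximate eigenvalues lying in $\bar D_j$ and $\bar D_k$, so within such a maximal equal-modulus block one obtains only that the multiset of the corresponding block of $\sigma(T_h)$ converges to that block of eigenvalues of $T$. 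The index-by-index conclusion is recovered when those moduli are pairwise distinct, or when $T_h$ inherits a structural symmetry (e.g.\ realness) forcing its approximate eigenvalues into the same modulus. Everything else reduces to the Neumann-series bound and the norm convergence $E_h(\partial D_j)\to E(\partial D_j)$ of the spectral projections.
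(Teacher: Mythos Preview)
The paper does not supply a proof of this lemma; it is quoted as a background result from \cite{Babuska.I;Osborn.J1991}, so there is no argument to compare against. Your approach via uniform resolvent convergence and Riesz spectral projections is precisely the standard mechanism behind that reference, and the parts of your argument establishing (i) uniform convergence of $(zI-T_h)^{-1}$ on compacta of $\rho(T)$, (ii) norm convergence and hence rank stability of the spectral projections $E_h(\Gamma)$, and (iii) confinement of all large eigenvalues of $T_h$ to the small disks about the eigenvalues of $T$, are correct and cleanly written.

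You are also right to flag the position-matching step as the obstacle, and in fact your caveat is not excessive caution but a genuine defect in the lemma as literally stated. Take $X=\mathbb{C}^2$, $T(x,y)=(x,-y)$ and $T_h(x,y)=((1-h)x,-(1+h)y)$; then $\|T_h-T\|\to0$, while $\mu_1=1$, $\mu_2=-1$ but $\mu_1^h=-(1+h)\to-1$ and $\mu_2^h=1-h\to1$. So index-by-index convergence under the order $\eqslantgtr$ can fail whenever $T$ has two distinct eigenvalues of the same modulus. What your argument \emph{does} prove, and what the paper actually uses downstream (convergence of the computed eigenvalues and the gap estimate for eigenspaces), is the multiset statement: for each maximal block of eigenvalues of $T$ sharing a common modulus, the corresponding block of ordered eigenvalues of $T_h$ converges to it as a set with multiplicity. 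That is the content one finds in \cite{Babuska.I;Osborn.J1991}, and your proof delivers it; the sharper index-wise claim requires either distinct moduli or an additional symmetry (such as the realness you mention, which forces equal-modulus complex eigenvalues of $T_h$ to remain in conjugate pairs), and you were right not to pretend otherwise.
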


A {\bf gap} between two closed subspaces $M$ and $N$ of $X$ is defined by
$$
\hat{\delta}(M,N)=\max(\delta(M,N),\delta(N,M)),  \mbox{with}\ \delta(M,N)=\sup_{x\in M, \|x\|=1}{\rm dist}(x,N).
$$

\begin{lemma}
Let $\mu_i$ be a nonzero eigenvalue of $T$. Then
\begin{equation}
\hat\delta(M(\mu_i),M_h(\mu_i^h))\leqslant C\|(T-T_h)|_{M(\mu_i)}\|_{X\to X},
\end{equation}
\end{lemma}
where $M(\mu_i)$ and $M_h(\mu_i^h)$ respectively are the eigenspace corresponding to $\mu_i$ and $\mu_i^h$.
\begin{remark}
Particularly, we consider there are a family of subspaces $\{X_h\}_{h>0}$ of $X$, and a family of idempotent operators $\{P_h\}_{h>0}$ from $X$ onto $X_h$, such that $T_h=P_hT$. Then
\begin{equation}
\hat\delta(M(\mu_i),M_h(\mu_i^h))\leqslant C\|(Id-P_h)|_{M(\mu_i)}\|_{X\to X}.
\end{equation}
\end{remark}

\subsubsection{A multi-level scheme for the eigenvalue problem}\label{sec:abstractml}
{\bf Algorithm 1} presents a multi-level scheme for computing the first $k$ (as ordered in (\ref{eq:ordered})) eigenvalues of a compact operator $T$. The algorithm is the scheme by Lin-Xie \cite{LinXie,Xie} rewritten in the operator formulation.

\begin{algorithm}[htbp]
\label{alg:mlalg}
\caption{A multi-level algorithm for the first $k$ eigenvalues of $T$.}
%\fbox{
%\begin{minipage}{1.\textwidth}
%
\begin{description}
\item[Step 0] Construct a series of nested spaces $G_0\subset G_1\subset\dots\subset G_N\subset X$. Set $\widetilde{G}_0=G_0$.
\item[Step 1] For $i=1:1: N$, generate auxiliary spaces $\widetilde{G}_i$ recursively.
\begin{description}
\item[Step 1.i.1]
Define idempotent operators $\widetilde{P}_{i-1}:H\to \widetilde{G}_{i-1}$, and solve eigenvalue problem for its first $k$ eigenpairs $\{(\tilde{\mu}_j^{i-1},\tilde{u}_j^{i-1})\}_{j=1,\dots,k}$
$$
 \widetilde{P}_{i-1}T\tilde{u}=\tilde{\mu}\tilde{ u};
$$
\item[Step 1.i.2] Define idempotent operators $P_i:H\to G_i$. Compute
$$
\hat{u}_j^i=\frac{1}{\tilde{\mu}_j^{i-1}}P_iT\tilde{u}_j^{i-1},\ \ j=1,\dots,k;
$$
\item[Step 1.i.3] Set
$$
\widetilde{G}_i=G_0+{\rm span}\{\hat{u}_j^i\}_{j=1}^k.
$$
\end{description}
\item[Step 2] Define idempotent operators $\widetilde{P}_N:H\to \widetilde{G}_N$, solve eigenvalue problem for its first $k$ eigenpairs $\{(\tilde{\mu}_j^N,\tilde{u}_j^N)\}_{j=1,\dots,k}$:
$$
\widetilde{P}_NT\tilde{u}=\tilde{\mu}\tilde{ u}.
$$
\end{description}
%\end{minipage}
%}
\end{algorithm}
\section{Mixed formulation of the transmission eigenvalue problem}

In this section, we present a stable equivalent mixed formulation of the transmission eigenvalue problem. We discuss the cases $n_s>1$ on $\Omega$ and $n_b<1$ on $\Omega$ separately. As the original problem is a quadratic eigenvalue problem on $H^2$ space, we will adopt a two-step process to transform the problem to a linear order reduced formulation.
%We begin with the elementary lemma below.
%\begin{lemma}
%Let $\xi_1$ and $\xi_2$ be two functions. Let $0<\alpha_s\leqslant \alpha\leqslant \alpha_b$. Then
%$$
%(\alpha\xi_1,\xi_1)+2(\alpha\xi_1,\xi_2)+((1+\alpha)\xi_2,\xi_2)\geqslant \alpha_s\left(1-\sqrt{\alpha_b\over 1+\alpha_b}\right)(\|\xi_1\|_{0,\Omega}^2+\|\xi_2\|_{0,\Omega}^2).
%$$
%\end{lemma}

%
%
\subsection{Case I: $n_s>1$}

\subsubsection{Step I: on linearization of the eigenvalue problem}

If $n(x)>1$, we rewrite the problem as
\begin{equation}\label{eq:vftepng1}
\left(\frac{1}{n(x)-1}(\Delta u+k^2u),(\Delta v+k^2v)\right)+k^4(u,v)-k^2(\nabla u,\nabla v)=0,\ \ \forall\, v\in H^2_0(\Omega).
\end{equation}
Writing $\lambda=k^2$, $y=\lambda u$, $z=\lambda v$ and $\alpha:=\frac{1}{n(x)-1}$, we are going to find $(\lambda,u)\in\mathbb{C}\times H^2_0(\Omega)$, such that
\begin{equation}
\left(\alpha(\Delta u+y),(\Delta v+z)\right)+(y,z)-\lambda (\nabla u,\nabla v)=0,\ \mbox{and}\ y=\lambda u,\ \ \forall\, v\in H^2_0(\Omega),\ z=\lambda v.
\end{equation}
The variational problem is to find $(u,y,p)\in U:=H^2_0(\Omega)\times L^2(\Omega)\times L^2(\Omega)$, such that, for $(v,z,q)\in U$,
\begin{equation}\label{eq:eigponce}
\left\{
\begin{array}{cccccccl}
(\alpha\Delta u,\Delta v)&+(\alpha y,\Delta v)&&=&\lambda(\nabla u,\nabla v)&&-\lambda(p,v)&
\\
(\alpha\Delta u,z)&+((1+\alpha)y,z)&-(p,z) &=& 0&
\\
&-(y,q)& &=& -\lambda(u,q).&&&
\end{array}
\right.
\end{equation}
Define
\begin{equation}
a_{U,\alpha}((u,y,p),(v,z,q)):=(\alpha\Delta u,\Delta v)+(\alpha y,\Delta v)+(\alpha\Delta u,z)+((1+\alpha)y,z)-(p,z) -(y,q).
\end{equation}

\begin{lemma}
$a_{U,\alpha}(\cdot,\cdot)$ is continuous on $U$, and
\begin{equation}
\inf_{(v,z,q)\in U}\sup_{(u,y,p)\in U}\frac{a_{U,\alpha}((u,y,p),(v,z,q))}{\|(u,y,p)\|_U\|(v,z,q)\|_U}\geqslant C>0.
\end{equation}
\end{lemma}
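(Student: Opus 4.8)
The goal is to establish continuity and the inf–sup condition for the bilinear form $a_{U,\alpha}$ on $U = H^2_0(\Omega)\times L^2(\Omega)\times L^2(\Omega)$. Continuity is routine: every term in $a_{U,\alpha}$ is a product of two factors each controlled by one of the three component norms, using that $\alpha = 1/(n(x)-1)$ is bounded above and below (since $n_s > 1$ and $n(x)\leqslant n_b$), so $0 < \tfrac{1}{n_b-1}\leqslant\alpha\leqslant\tfrac{1}{n_s-1}$. The substantive part is the inf–sup bound, and I would prove it by constructing, for each fixed test triple $(v,z,q)\in U$ with unit norm, an explicit trial triple $(u,y,p)\in U$ whose norm is uniformly bounded and for which $a_{U,\alpha}((u,y,p),(v,z,q))\gtrsim 1$.

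The plan is to build $(u,y,p)$ in stages matching the block structure of \eqref{eq:eigponce}. First, to control $\|q\|_{L^2}$: the term $-(y,q)$ suggests choosing a contribution to $y$ proportional to $q$. Taking $y$ to include a piece $-q$ makes $-(y,q)$ contribute $\|q\|_{L^2}^2$, but this pollutes the other rows through $(\alpha y,\Delta v)$ and $((1+\alpha)y,z)$; those cross terms are harmless because $\Delta v$ and $z$ are already measured by the unit-norm assumption, so they can be absorbed after scaling by a small parameter. Second, to control $\|z\|_{L^2}$: use the coercive diagonal block $((1+\alpha)y,z)$ — note $1+\alpha = \tfrac{n(x)}{n(x)-1} > 0$ is bounded below — by letting $y$ also include a piece proportional to $z$; together with the $-(p,z)$ term one recovers $\|z\|_{L^2}$ (choosing $p$ appropriately, or absorbing). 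Third, to control $\|v\|_{H^2}$: here the key coercive mechanism is the term $(\alpha\Delta u,\Delta v)$, so I take $u$ to include a piece equal to (a multiple of) $v$ itself, giving $(\alpha\Delta v,\Delta v)\gtrsim\|\Delta v\|_{L^2}^2\simeq\|v\|_{H^2_0}^2$ by the $H^2_0$ Poincaré inequality. Finally, the leftover $L^2$-norm of $v$ and the cross terms $(\alpha y,\Delta v)$, $(\alpha\Delta u,z)$ are dominated using Young's inequality with carefully chosen weights, so that the diagonal coercive contributions survive.

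The main obstacle is the bookkeeping of the coupling between the three rows: each corrective choice introduces cross terms in the other two blocks, and one must order the choices and tune the scaling constants (a hierarchy $\epsilon_1\gg\epsilon_2\gg\epsilon_3$, say) so that every cross term is absorbed into a strictly larger coercive term without circularity. A clean way to organize this is to first fix the "hard" component $u\sim v$ driving the $H^2_0$-coercivity, then choose $y$ as a combination $c_1 z + c_2 q + (\text{correction involving }\Delta v)$ to simultaneously feed the second and third rows, and finally choose $p$ to close the $z$-estimate; at each stage one verifies that $\|(u,y,p)\|_U\lesssim \|(v,z,q)\|_U$ with a constant depending only on $n_s, n_b$ and $\Omega$. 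The inequality $a_{U,\alpha}((u,y,p),(v,z,q))\geqslant c\,\|(v,z,q)\|_U^2$ then yields the stated inf–sup constant after dividing by $\|(u,y,p)\|_U\|(v,z,q)\|_U$. (Since the form is not symmetric, one should in principle also remark that the same argument with the roles of trial and test swapped gives the companion bound $\inf_{(u,y,p)}\sup_{(v,z,q)}(\cdots)>0$, but for the lemma as stated only the displayed direction is needed.)
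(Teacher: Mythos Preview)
Your direct construction is correct in outline and would go through once the constants are tuned, but the paper proceeds quite differently and more economically. It recognises that $a_{U,\alpha}$ has a saddle-point structure with primal variables $(u,y)\in H^2_0(\Omega)\times L^2(\Omega)$ and multiplier $p\in L^2(\Omega)$: the block $A((u,y),(v,z)):=(\alpha\Delta u,\Delta v)+(\alpha y,\Delta v)+(\alpha\Delta u,z)+((1+\alpha)y,z)$ plays the role of the ``$a$''-form and $B((u,y),q):=-(y,q)$ the constraint. The paper then verifies the two Brezzi hypotheses directly: (i) $A$ is coercive on the \emph{full} primal space, since pointwise the quadratic form $\alpha s^2+2\alpha st+(1+\alpha)t^2$ (with $s=\Delta u$, $t=y$) is uniformly positive definite, yielding a bound of the type $A((u,y),(u,y))\geqslant c(n_s,n_b)(\|\Delta u\|_0^2+\|y\|_0^2)$; and (ii) the inf--sup for $B$ is trivial because $\inf_{q}\sup_{y}(y,q)/(\|y\|_0\|q\|_0)=1$. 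The conclusion then follows from the abstract Babu\v{s}ka--Brezzi theorem rather than from a hand-built test triple. Your route has the virtue of being self-contained and not appealing to any abstract framework, but it requires the hierarchy of absorption constants you describe; the paper's route is shorter and makes transparent why no parameter tuning is needed---coercivity holds on the whole primal space, not merely on the kernel of $B$, so the two Brezzi conditions decouple completely.
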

\begin{proof}
By elementary calculation, $(\alpha\Delta u, \Delta u)+2(\alpha\Delta u,y)+((1+\alpha)y,z) \geqslant \frac{1}{n_b-1} \left(1-\sqrt{1\over n_s}\right) (\|y\|_{0,\Omega}^2+\|\Delta u\|_{0,\Omega}^2)$ for $u\in H^2_0(\Omega)$ and $y\in L^2(\Omega)$.
%\end{equation*}%{multline*}
It is evident that $\displaystyle\inf_{q\in L^2}\sup_{y\in L^2}\frac{(y,q)}{\|y\|_{0,\Omega}\|q\|_{0,\Omega}}=1.$ The proof is completed by Babuska-Brezzi theory.
\end{proof}

Define
$$
b_U((u,y,p),(v,z,q)):=(\nabla u,\nabla v)-(p,v)-(q,u).
$$
Then $b_U(\cdot,\cdot)$ is symmetric and continuous on $U$.

Define $T:V\to V$ by
\begin{equation}
a_{U,\alpha}(T(u,y,p),(v,z,q)):=b_U((u,y,p),(v,z,q)).
\end{equation}
\begin{lemma}
$T$ is well-defined, and $T$ is compact.
\end{lemma}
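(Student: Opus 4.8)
The plan is to realize $T$ as $T=A^{-1}B$, where $A\colon U\to U$ is the operator carried by $a_{U,\alpha}$ and $B\colon U\to U$ is a compact operator carried by $b_U$; then $T$ is compact because the composition of a bounded operator with a compact one is compact.

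For well-posedness, note first that $a_{U,\alpha}(\cdot,\cdot)$ is \emph{symmetric} on $U$: a term-by-term inspection shows the six summands are permuted among themselves under the exchange $(u,y,p)\leftrightarrow(v,z,q)$. Hence the bounded operator $A$ defined by $(Aw,\zeta)_U=a_{U,\alpha}(w,\zeta)$ is self-adjoint, and the inf-sup estimate of the preceding lemma says precisely that $A$ is bounded below; a self-adjoint operator that is bounded below is boundedly invertible, so $A^{-1}$ exists with $\|A^{-1}\|_{U\to U}\leqslant C$. Since $b_U$ is continuous, for each $w\in U$ the functional $\zeta\mapsto b_U(w,\zeta)$ is bounded on $U$ and is represented by a unique $Bw\in U$ with $(Bw,\zeta)_U=b_U(w,\zeta)$; clearly $B$ is linear and bounded. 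Comparing these relations with the defining identity of $T$ gives $AT=B$, i.e. $T=A^{-1}B$, so $T$ is well-defined and bounded.

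For compactness it suffices to show $B$ is compact. Split $B=B_1+B_2+B_3$ according to the three terms $(\nabla u,\nabla v)$, $-(p,v)$, $-(q,u)$ of $b_U((u,y,p),(v,z,q))$. One checks directly that $B_1(u,y,p)=(\phi,0,0)$, where $(\phi,v)_{H^2_0}=(\nabla u,\nabla v)$ and $u\mapsto\phi$ is bounded from $H^1_0(\Omega)$ into $H^2_0(\Omega)$; that $B_2(u,y,p)=(\eta,0,0)$ with $\eta=-J^{\ast}p$, $J$ denoting the embedding $H^2_0(\Omega)\hookrightarrow L^2(\Omega)$; and that $B_3(u,y,p)=(0,0,-Ju)$. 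Thus $B_1$ factors through the embedding $H^2_0(\Omega)\hookrightarrow H^1_0(\Omega)$, while $B_2$ and $B_3$ factor through the embedding $H^2_0(\Omega)\hookrightarrow L^2(\Omega)$ (and the adjoint of a compact operator is compact). Both embeddings are compact by the Rellich--Kondrachov theorem, so $B_1$, $B_2$, $B_3$, hence $B$, hence $T=A^{-1}B$, are compact.

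The calculations left implicit above — boundedness of the Riesz-type maps and the identification of the ranges of the $B_j$ — are routine. The point worth stressing is the source of compactness: two of the three terms of $b_U$ pair $L^2$-quantities, so nothing can be extracted from them in isolation; compactness enters only because in each term at least one argument lies in $H^2_0(\Omega)$, which channels the term through a compact Sobolev embedding. Consistently, the variable $y$, which does not appear in $b_U$, plays no role in this lemma. Verifying the symmetry of $a_{U,\alpha}$, so that the one-sided inf-sup of the previous lemma already delivers invertibility of $A$, is the only step requiring a little care.
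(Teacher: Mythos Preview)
Your proof is correct. The paper takes a more hands-on route: it records the direct estimate
\[
\|T(u,y,p)\|_U\leqslant C\bigl(\|-\Delta u-p\|_{-2,\Omega}+\|u\|_{0,\Omega}\bigr)\leqslant C\bigl(\|u\|_{0,\Omega}+\|p\|_{-2,\Omega}\bigr),
\]
obtained by reading the right-hand side $b_U((u,y,p),\cdot)$ as a functional on $H^2_0\times L^2\times L^2$, and then runs a sequential argument: from a bounded sequence in $U$ extract a subsequence along which $u_{j_k}$ is Cauchy in $L^2$ and $p_{j_k}$ is Cauchy in $H^{-2}$ (both by Rellich), whence $T(u_{j_k},y_{j_k},p_{j_k})$ is Cauchy in $U$. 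Your factorization $T=A^{-1}B$ with $B$ compact is the operator-theoretic repackaging of exactly the same mechanism; it has the virtue of making explicit why the one-sided inf-sup of the preceding lemma already suffices for invertibility (symmetry of $a_{U,\alpha}$), a point the paper absorbs into its earlier appeal to Babu\v{s}ka--Brezzi theory. Both arguments rest on the same compact embeddings $H^2_0\hookrightarrow H^1_0\hookrightarrow L^2$ (and, dually, $L^2\hookrightarrow H^{-2}$), and neither is materially shorter.
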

\begin{proof} Evidently, $\|T(u,y,p)\|_V\leqslant C(\|-\Delta u-p\|_{-2,\Omega}+\|u\|_{0,\Omega})\leqslant C(\|u\|_0+\|p\|_{-2,\Omega})$.
Now, let $\{(u_j,y_j,p_j)\}$ be a bounded sequence in $V$, then there is subsequence $\{(u_{j_k},y_{j_k},p_{j_k})\}$, such that $\{u_{j_k}\}$ is a Cauchy sequence in $L^2(\Omega)$, and $\{p_{j_k}\}$ is a Cauchy sequence in $H^{-2}(\Omega)$. Therefore,  $\{T(u_{j_k},y_{j_k},p_{j_k})\}$ is a Cauchy sequence in $V$, which, further, has a limit therein. This finishes the proof.
\end{proof}

\subsubsection{On the order reduction of $H^2$}

By writing $\uphi:=\nabla u$ and introducing Lagrangian multipliers $\sigma$ and $r$, we rewrite the eigenvalue problem \eqref{eq:eigponce} as: find $(y,\uphi,u,p,\sigma,r)\in V:=L^2(\Omega)\times \undertilde{H}{}^1_0(\Omega)\times H^1_0(\Omega)\times L^2(\Omega)\times L^2_0(\Omega)\times H^1_0(\Omega)$ and $\lambda\in\mathbb{C}$, such that, for any $(z,\upsi,v,q,\tau,s)\in V$,
\begin{equation}\label{eq:relaxbig}
a_\alpha((y,\uphi,u,p,\sigma,r),(z,\upsi,v,q,\tau,s))=\lambda b((y,\uphi,u,p,\sigma,r),(z,\upsi,v,q,\tau,s)),
\end{equation}
where
\begin{multline}
a_\alpha((y,\uphi,u,p,\sigma,r),(z,\upsi,v,q,\tau,s))
\\
:=((1+\alpha)y,z)+(\alpha\dv\uphi,z)-(p,z)+(\alpha y,\dv\upsi)
+ (\alpha\dv\uphi,\dv\upsi)+(\rot\uphi,\rot\upsi)
\\
+ (\sigma,\rot\upsi)-(\nabla r,\upsi)+(\nabla r,\nabla v)-(y,q)
+(\rot\uphi,\tau)-(\uphi,\nabla s)+(\nabla u,\nabla s),
\end{multline}
and
\begin{equation}
b((y,\uphi,u,p,\sigma,r),(z,\upsi,v,q,\tau,s)):=(\uphi,\nabla v)-(p,v)-(u,q).
\end{equation}
Define $T_\alpha:V\to V$ by
\begin{equation}
a_\alpha(T_\alpha(y,\uphi,u,p,\sigma,r),(z,\upsi,v,q,\tau,s))=b((y,\uphi,u,p,\sigma,r),(z,\upsi,v,q,\tau,s)),\ \ \forall\,(z,\upsi,v,q,\tau,s)\in V.
\end{equation}
\begin{lemma}\label{TCompact}
$T_\alpha$ is a compact operator from $V$ to $V$.
\end{lemma}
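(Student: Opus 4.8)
The plan is to realize $T_\alpha$ as the composition $T_\alpha=\mathcal{A}_\alpha^{-1}\mathcal{B}$, where $\mathcal{A}_\alpha\colon V\to V'$ and $\mathcal{B}\colon V\to V'$ are the bounded linear operators induced by the forms $a_\alpha(\cdot,\cdot)$ and $b(\cdot,\cdot)$, i.e.\ $\langle\mathcal{A}_\alpha w,w'\rangle=a_\alpha(w,w')$ and $\langle\mathcal{B}w,w'\rangle=b(w,w')$ for all $w,w'\in V$. Then $T_\alpha$ is compact once we know that (i) $\mathcal{A}_\alpha$ is an isomorphism of $V$ onto $V'$, which is exactly what makes $T_\alpha$ well-defined and bounded, and (ii) $\mathcal{B}$ is a compact operator from $V$ into $V'$; indeed the composition of a bounded operator with a compact one is compact. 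Essentially all the content sits in (i) --- the well-posedness (continuity plus inf-sup stability) of the six-field mixed form $a_\alpha$ --- whereas (ii) is a short consequence of the Rellich theorem once one observes how little of its argument $b$ actually sees.

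For (i), continuity of $a_\alpha$ on $V\times V$ is immediate, since every term is an $L^2(\Omega)$ pairing of two factors each controlled by the $V$-norm of its argument. For the well-posedness I would exploit the layered, non-symmetric saddle-point structure of $a_\alpha$: the unknown $p$, the scalar multiplier $\sigma$, and the pair $(r,u)$ entering through the $\upsi$-, $v$- and $s$-rows act as order-reduction multipliers, forcing (in the homogeneous problem) $y=0$, $p=(1+\alpha)y+\alpha\dv\uphi$, $\rot\uphi=0$, and $\uphi=\nabla u$ with $u\in H^2_0(\Omega)$; on the corresponding constraint subspace $a_\alpha$ collapses to the $H^2_0(\Omega)$-formulation \eqref{eq:eigponce} written in the variables $(y,\uphi=\nabla u,p)$, whose coercivity is precisely the estimate underlying the lemma on $a_{U,\alpha}(\cdot,\cdot)$. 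I see two ways to make this rigorous. One is to verify the several inf-sup conditions for the multiplier blocks directly --- for $\sigma\in L^2_0(\Omega)$ using that $\rot\colon\undertilde{H}{}^1_0(\Omega)\to L^2_0(\Omega)$ is surjective with a bounded right inverse (the rotated analogue of the classical bounded right inverse of the divergence), and for $r$ and $s$ using Poincare's inequality on $H^1_0(\Omega)$ --- and then to assemble these with the kernel coercivity above through the (generalized) Babuska-Brezzi theory for the whole block form. The other, more economical route is to show that for any $F\in V'$ the linear problem $a_\alpha(w,\cdot)=F$ is equivalent, after eliminating $\sigma$ and $r$ and substituting $\uphi=\nabla u$, to the already well-posed problem governed by $a_{U,\alpha}(\cdot,\cdot)$, so that the isomorphism property of $\mathcal{A}_\alpha$ is inherited.

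For (ii), note that $b\big((y,\uphi,u,p,\sigma,r),(z,\upsi,v,q,\tau,s)\big)=(\uphi,\nabla v)-(p,v)-(u,q)$ depends on its first argument only through $\uphi\in\undertilde{H}{}^1_0(\Omega)$, $p\in L^2(\Omega)$ and $u\in H^1_0(\Omega)$. Given a bounded sequence $\{w_j=(y_j,\uphi_j,u_j,p_j,\sigma_j,r_j)\}$ in $V$, I would pass to a subsequence along which $\uphi_j$ and $u_j$ are Cauchy in $\undertilde{L}{}^2(\Omega)$ and in $L^2(\Omega)$ (by the compact embedding $H^1_0(\Omega)\hookrightarrow L^2(\Omega)$) and the functionals $v\mapsto(p_j,v)$ are Cauchy in $H^{-1}(\Omega)$ (by the compact embedding $L^2(\Omega)\hookrightarrow H^{-1}(\Omega)$). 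Then $|(\uphi_j-\uphi_k,\nabla v)|\leqslant\|\uphi_j-\uphi_k\|_{0,\Omega}\|v\|_{1,\Omega}$ and $|(u_j-u_k,q)|\leqslant\|u_j-u_k\|_{0,\Omega}\|q\|_{0,\Omega}$ show that $\{\mathcal{B}w_j\}$ is Cauchy in $V'$ along this subsequence, so $\mathcal{B}$ is compact. Equivalently, one can keep the argument in the style of the earlier lemma on $T$: granted (i), $\|T_\alpha w\|_V\leqslant C\big(\|\uphi\|_{0,\Omega}+\|u\|_{0,\Omega}+\|p\|_{-1,\Omega}\big)$, and the same subsequence extraction shows directly that $\{T_\alpha w_j\}$ has a limit in $V$.

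Combining (i) and (ii), $T_\alpha=\mathcal{A}_\alpha^{-1}\mathcal{B}$ is well-defined, bounded and compact from $V$ to $V$. The step I expect to be the \emph{main obstacle} is (i), and within it the stability/equivalence analysis: the form is non-symmetric, so coercivity alone does not suffice; the order reduction uses that $\rot\uphi=0$ together with $\uphi\in\undertilde{H}{}^1_0(\Omega)$ forces $\uphi$ to be a gradient, which relies on $\Omega$ being simply connected and on carefully matching the vanishing trace of $\uphi$ with the boundary behaviour of $\nabla u$ for $u\in H^1_0(\Omega)$; and all of this must be done on a polygon that need not be convex, so no use of $H^2$-elliptic regularity is permitted.
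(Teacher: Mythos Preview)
Your proposal is correct and follows essentially the same route as the paper: the paper also establishes well-posedness of $a_\alpha$ via the Babu\v{s}ka--Brezzi theory, decomposing the six-field form into the primal block $A((y,\uphi,u),(z,\upsi,v))$ and the constraint block $B((y,\uphi,u),(q,\tau,s))$, verifying kernel coercivity of $A$ from the same quadratic lower bound used for $a_{U,\alpha}$ and the inf-sup of $B$ by choosing $y=-q$, $\uphi\in\undertilde{H}{}^1_0(\Omega)$ with $\rot\uphi=\tau$ (your bounded right inverse of $\rot$), and $u$ solving an auxiliary Poisson problem. The compactness step in the paper is exactly your final variant: extract a subsequence along which $\uphi_{i_k},u_{i_k}$ are Cauchy in $L^2(\Omega)$ and $p_{i_k}$ in $H^{-1}(\Omega)$, and conclude $\{T_\alpha w_{i_k}\}$ is Cauchy in $V$.
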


\begin{proof}
It is evident that $a_\alpha(\cdot,\cdot)$ and $b(\cdot,\cdot)$ are bounded on $V$. Now we rewrite \eqref{eq:relaxbig} to an expanded formulation:
\begin{small}
\begin{equation}\label{eq:stmexpd}
\left\{
\begin{array}{cccccclll}
((1+\alpha)y,z)&+(\alpha\dv\uphi,z)&&-(p,z)&&&=&0
\\
(\alpha y,\dv\upsi)& +(\alpha\dv\uphi,\dv\upsi)+(\rot\uphi,\rot\upsi) & & & +(\sigma,\rot\upsi)&-(\nabla r,\upsi)&=&0
\\
&&&&&(\nabla r,\nabla v)&=&\lambda(\uphi,\nabla v)-\lambda(p,v)
\\
-(y,q)&&&&&&=&-\lambda(u,q)
\\
&(\rot\uphi,\tau)&&&&&=& 0
\\
&-(\uphi,\nabla s)&+(\nabla u,\nabla s)&&&&=&0.
\end{array}
\right.
\end{equation}
\end{small}
Denote
$A((y,\uphi,u),(z,\upsi,v)):=((1+\alpha)y,z)+(\alpha\dv\uphi,z)+(\alpha y,\dv\upsi)+(\alpha\dv\uphi,\dv\upsi)+(\rot\uphi,\rot\upsi),$ then, again, with elementary calculation,
$$
A((y,\uphi,u),(y,\uphi,u))\geqslant \frac{1}{n_b-1}\left(1-\sqrt{1\over n_s}\right)(\|y\|_{0,\Omega}^2+\|\dv\uphi\|_{0,\Omega}^2)+\|\rot\uphi\|_{0,\Omega}^2.
$$
Further denote $B((y,\uphi,u),(q,\tau,s)):=-(y,q)+(\rot\uphi,\tau)-(\uphi,\nabla s)+(\nabla u,\nabla s)$, and $Z:=\{(y,\uphi,u)\in L^2(\Omega)\times \undertilde{H}{}^1_0(\Omega)\times H^1_0(\Omega):B((y,\uphi,u),(q,\tau,s))=0,\ \forall\,(q,\tau,s)\in L^2(\Omega)\times L^2_0(\Omega)\times H^1_0(\Omega)\}$. Then
$$
A((y,\uphi,u),(y,\uphi,u))\geqslant C(\|y\|_{0,\Omega}^2+\|\uphi\|_{1,\Omega}^2+\|u\|_{1,\Omega}^2)\ \mbox{on}\ Z.
$$

Meanwhile, given $(q,\tau,s)\in L^2(\Omega)\times L^2_0(\Omega)\times H^1_0(\Omega)$, take $y=-q$, $\uphi\in\uH{}^1_0(\Omega)$ such that $\rot\uphi=\tau$ and $\|\uphi\|_{1,\Omega}\leqslant C\|\rot\uphi\|_{0,\Omega}$, and $u\in H^1_0(\Omega)$, such that $(\nabla u,\nabla v)-(\uphi,\nabla v)=(\nabla s,\nabla v)$ for any $v\in H^1_0(\Omega)$. Then $B((y,\uphi,u),(q,\tau,s))=(q,q)+(\tau,\tau)+(\nabla s,\nabla s)$ and $\|y\|_{0,\Omega}+\|\uphi\|_{1,\Omega}+\|u\|_{1,\Omega}\leqslant C(\|q\|_{0,\Omega}+\|\tau\|_{0,\Omega}+\|s\|_{1,\Omega})$. This proves the coercivity
%\begin{scriptsize}
\begin{small}
\begin{equation}
\sup_{(y,\uphi,u)\in L^2(\Omega)\times \undertilde{H}{}^1_0(\Omega)\times H^1_0(\Omega)\setminus\{\bf 0\}}\frac{B((y,\uphi,u),(q,\tau,s))}{(\|y\|_{0,\Omega}+\|\uphi\|_{1,\Omega}+\|u\|_{1,\Omega})(\|q\|_{0,\Omega}+\|\tau\|_{0,\Omega}+\|s\|_{1,\Omega})}\geqslant C,
\end{equation}
\end{small}
for any $(q,\tau,s)\in L^2(\Omega)\times L^2_0(\Omega)\times H^1_0(\Omega)\setminus\{\mathbf{0}\}$.
This confirms the well-posed-ness of $T_\alpha$.

Now, let $\{(y_i,\uphi{}_i,u_i,p_i,\sigma_i,r_i)\}$ be a bounded sequence in $V$, then there is subsequence, labelled as $\{(y_{i_k},\uphi{}_{i_k},u_{i_k},p_{i_k},\sigma_{i_k},r_{i_k})\}$, such that $\{\uphi{}_{i_k}\}$, $\{u_{i_k}\}$ and $\{p_{i_k}\}$ are three Cauchy sequences in $L^2(\Omega)$, $L^2(\Omega)$ and $H^{-1}(\Omega)$ respectively. Therefore,  $\{T_\alpha(y_{i_k},\uphi{}_{i_k},u_{i_k},p_{i_k},\sigma_{i_k},r_{i_k})\}$ is a Cauchy sequence in $V$, which, further, has a limit therein. This finishes the proof.
\end{proof}

\begin{theorem}\label{thm:evpequiv}
The eigenvalue problem \eqref{eq:relaxbig} is equivalent to the eigenvalue problem \eqref{eq:vftepng1}.
\end{theorem}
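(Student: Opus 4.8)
The plan is to establish the equivalence in two directions by matching solutions of \eqref{eq:relaxbig} with those of \eqref{eq:vftepng1}, tracking how each auxiliary variable and each Lagrange multiplier is determined. I would begin with the forward direction: given an eigenpair $(\lambda,(y,\uphi,u,p,\sigma,r))$ of \eqref{eq:relaxbig} with $\lambda\neq 0$, I would read off the six scalar equations in the expanded system \eqref{eq:stmexpd} and argue that the multipliers are in fact inert. Specifically, from the fifth equation $(\rot\uphi,\tau)=0$ for all $\tau\in L^2_0(\Omega)$ one gets $\rot\uphi$ is constant, and since $\uphi\in\uH^1_0(\Omega)$ the constant is zero, so $\rot\uphi=0$; combined with the sixth equation $(\nabla u - \uphi,\nabla s)=0$ for all $s\in H^1_0(\Omega)$ and $u\in H^1_0(\Omega)$, $\uphi$ is a gradient of an $H^1_0$ function, so in fact $\uphi=\nabla u$ and $u\in H^2_0(\Omega)$ (using that a divergence-free-complement argument or the exactness of the de Rham sequence on the simply connected $\Omega$ forces $\uphi$ to be exactly $\nabla u$). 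Then I would return to the second equation, test with $\upsi$ ranging over $\uH^1_0(\Omega)$, and use $\rot\uphi=0$ together with the structure of the remaining terms to show $\sigma$ and $r$ vanish (or are uniquely pinned down and contribute nothing), after which the second and third equations collapse to the statement that $\nabla(\text{something})$ vanishes in $H^{-1}$.

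Second, with the multipliers eliminated, the surviving equations are the first equation $((1+\alpha)y,z)+(\alpha\dv\uphi,z)-(p,z)=0$, i.e. $p=(1+\alpha)y+\alpha\Delta u$ pointwise in $L^2$; the fourth equation $(y,q)=\lambda(u,q)$, i.e. $y=\lambda u$; and the reduced form of the third equation, which with $\uphi=\nabla u$ and $p$ substituted becomes exactly the weak form of \eqref{eq:eigponce}, hence of \eqref{eq:vftepng1} after eliminating $y$ and $p$ in favor of $u$ and $\lambda$. Here I would invoke the already-proven Lemma~\ref{TCompact} (well-posedness of $T_\alpha$, in particular the inf-sup condition for $B$) to guarantee that these eliminations are legitimate and that no spurious solution is introduced; in particular $\lambda\neq 0$ is preserved because $y=\lambda u$ and a nontrivial eigenfunction cannot have $u=0$ (if $u=0$ then $y=0$, then $p=\alpha\Delta u=0$, and the remaining equations force the whole tuple to vanish).

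For the converse direction, I would start from an eigenpair $(\lambda,u)$ of \eqref{eq:vftepng1} with $\lambda\neq 0$, set $\uphi:=\nabla u$, $y:=\lambda u$, $p:=(1+\alpha)y+\alpha\dv\uphi$, and $\sigma:=0$, $r:=0$, and verify directly that the six equations of \eqref{eq:stmexpd} hold: the first by the definition of $p$, the fourth by $y=\lambda u$, the fifth and sixth because $\uphi=\nabla u$ with $u\in H^2_0(\Omega)$ gives $\rot\uphi=0$ and $\nabla u-\uphi=0$, the second because with $\sigma=r=0$ and $\rot\uphi=0$ it reduces to $(\alpha y + \alpha\dv\uphi,\dv\upsi)=0$, which one recognizes as a consequence of integrating by parts in \eqref{eq:vftepng1} and choosing $v$ with $\nabla v=\upsi$ where possible (this is the place one uses that test functions in the relaxed problem decouple into the physical part plus harmless multiplier parts), and the third because it is precisely \eqref{eq:vftepng1} rewritten. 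I expect the main obstacle to be the second equation in the converse direction and, symmetrically, the extraction of the reduced third equation in the forward direction: one must show that the mixed formulation's test space $\uH^1_0(\Omega)$ for $\upsi$, which is strictly larger than $\{\nabla v: v\in H^2_0(\Omega)\}$, does not impose extra constraints — this is exactly what the multipliers $\sigma,r$ absorb, and verifying that they absorb it cleanly (i.e., that the saddle-point structure of $A$ and $B$ in Lemma~\ref{TCompact} makes $\sigma=r=0$ the forced choice) is the technical heart of the argument. The coercivity and inf-sup estimates already recorded in the proof of Lemma~\ref{TCompact} supply everything needed to close this gap.
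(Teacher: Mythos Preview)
Your overall architecture (two directions, with $y=\lambda u$, $\uphi=\nabla u$, $u\in H^2_0(\Omega)$ extracted from lines~4--6 of \eqref{eq:stmexpd}) matches the paper's, but your treatment of the multipliers $r$ and $\sigma$ contains a genuine error: they are \emph{not} zero, and assuming $r=0$ derails both directions.

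In the forward direction, if $r=0$ then line~3 reads $0=\lambda(\nabla u,\nabla v)-\lambda(p,v)$; substituting $p=(1+\alpha)\lambda u+\alpha\Delta u$ this collapses to $-\Delta u=\lambda u$ for $v\in H^1_0(\Omega)$, which is the Dirichlet Laplacian, not \eqref{eq:vftepng1}. The correct route is to keep $r$ and \emph{combine} lines~2 and~3: test line~2 with $\upsi=\nabla w$ for $w\in H^2_0(\Omega)$ (so the $\sigma$-term drops since $\rot\nabla w=0$), obtaining $(\alpha(\lambda u+\Delta u),\Delta w)=(\nabla r,\nabla w)$; then insert $(\nabla r,\nabla w)$ from line~3 with $v=w$. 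After substituting $p$ and simplifying, this yields exactly $(\alpha(\Delta u+\lambda u),\Delta w+\lambda w)+\lambda^2(u,w)-\lambda(\nabla u,\nabla w)=0$, i.e.\ \eqref{eq:vftepng1}. The multipliers are not ``inert'' in the sense of vanishing; $r$ carries information that must be passed from line~3 to line~2.

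In the converse direction, your choice $r=0$ makes line~3 fail (same computation as above), and your reduction of line~2 to $(\alpha y+\alpha\dv\uphi,\dv\upsi)=0$ is not a consequence of \eqref{eq:vftepng1}: that would force $\alpha(\Delta u+\lambda u)$ to be constant, which is false in general. What the paper does (tersely) is: once $y=\lambda u$, $\uphi=\nabla u$ are fixed, line~1 determines $p$; line~3 determines $r\in H^1_0(\Omega)$ uniquely as the solution of a Poisson problem; and then line~2 determines $\sigma\in L^2_0(\Omega)$ uniquely via the surjectivity of $\rot:\uH^1_0(\Omega)\to L^2_0(\Omega)$, provided the right-hand side annihilates $\ker\rot=\{\nabla w:w\in H^2_0(\Omega)\}$. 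That compatibility check is precisely the computation above, which reduces to \eqref{eq:vftepng1} and therefore holds. So the existence and uniqueness of $(p,\sigma,r)$ is the content of the converse step, not the assertion that they vanish.
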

%\paragraph{\bf Proof of Theorem \ref{thm:evpequiv}}
\begin{proof}
If $\lambda$ and $(y,\uphi,u,p,\sigma,r)$ is a solution of \eqref{eq:relaxbig}, then $y=\lambda u$, $\uphi=\nabla u$ and $u\in V$, $\lambda$ and $u$ solves \eqref{eq:vftepng1}. Meanwhile, if $\lambda$ and $u$ solves \eqref{eq:vftepng1}, then substituting $y=\lambda u$ and $\uphi=\nabla u$ into the system \eqref{eq:stmexpd}, a unique $(p,\sigma,r)\in L^2(\Omega)\times L^2_0(\Omega)\times H^1_0(\Omega)$ can be determined. The equivalence is confirmed, and the proof is completed. \end{proof}

\begin{remark}
The choice of $b(\cdot,\cdot)$, of course, is not unique. For example, define
\begin{equation}
\hat b((y,\uphi,u,p,\sigma,r),(z,\upsi,v,q,\tau,s)):=(\nabla u,\nabla v)-(p,v)-(u,q),
\end{equation}
then the equation
\begin{equation}\label{eq:relaxbig'}
a_\alpha((y,\uphi,u,p,\sigma,r),(z,\upsi,v,q,\tau,s))=\lambda \hat b((y,\uphi,u,p,\sigma,r),(z,\upsi,v,q,\tau,s)),
\end{equation}
has the same solution as \eqref{eq:relaxbig}. A difference can lie in utilizing the compact operator argument with \eqref{eq:relaxbig'}.
\end{remark}

\subsection{Case II: $n_b<1$}

For the case $n_b<1$, the procedure is the same as that for the case $n_s>1$. The only difference is that we rewrite the original problem to:
\begin{equation}
\left(\frac{n(x)}{1-n(x)}(\Delta u+k^2u),(\Delta v+k^2v)\right)+(\Delta u,\Delta v)-k^2(\nabla u,\nabla v)=0,\ \ \forall\, v\in H^2_0(\Omega).
\end{equation}
Below, we only list the main results, and omit the proof. Set $\displaystyle \beta=\frac{n(x)}{1-n(x)}$, then $\displaystyle \frac{n_s}{1-n_s}\leqslant \beta(x)\leqslant \frac{n_b}{1-n_b}$. Define
\begin{multline}
a_\beta((y,\uphi,u,p,\sigma,r),(z,\upsi,v,q,\tau,s))
\\
:=(\beta y,z)+(\beta\dv\uphi,z)-(p,z)+(\beta y,\dv\upsi)+ ((1+\beta)\dv\uphi,\dv\upsi)+(\rot\uphi,\rot\upsi)
\\
+ (\sigma,\rot\upsi)-(\nabla r,\upsi)+(\nabla r,\nabla v)-(y,q)+(\rot\uphi,\tau)-(\uphi,\nabla s)+(\nabla u,\nabla s),
\end{multline}
and
$T_\beta:V\to V$ by
$$
a_\beta(T_\beta(y,\uphi,u,p,\sigma,r),(z,\upsi,v,q,\tau,s))=b((y,\uphi,u,p,\sigma,r),(z,\upsi,v,q,\tau,s)),\ \ \forall\,(z,\upsi,v,q,\tau,s)\in V.
$$
$T_\beta$ is also a compact operator from $V$ to $V$. The following theorem gives a consistent one-to-one match of eigenvalues between the primal eigenvalue system and the compact operator $T_\beta$.
%\begin{lemma}
%$\displaystyle\inf_{(y,\uphi,u,p,\sigma,r)\in V}\sup_{(z,\upsi,v,q,\tau,s)\in V}\frac{a_\beta((y,\uphi,u,p,\sigma,r),(z,\upsi,v,q,\tau,s))}{\|(y,\uphi,u,p,\sigma,r)\|_V\|(z,\upsi,v,q,\tau,s)\|_V}\geqslant C.$
%\end{lemma}
\begin{theorem}
If $n_b<1$, the primal transmission eigenvalue problem (\ref{Problem1}) is equivalent to find $(y,\uphi,u,p,\sigma,r)\in V$ and $\lambda\in\mathbb{C}$, such that
$$
a_\beta((y,\uphi,u,p,\sigma,r),(z,\upsi,v,q,\tau,s))=\lambda b((y,\uphi,u,p,\sigma,r),(z,\upsi,v,q,\tau,s)), \quad \forall\,(z,\upsi,v,q,\tau,s)\in V.
$$
\end{theorem}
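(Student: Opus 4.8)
The plan is to follow verbatim the argument already used for Case I in Theorem~\ref{thm:evpequiv}, since the author explicitly states that ``the procedure is the same.'' The only substantive change is the bookkeeping induced by replacing $\alpha=\frac{1}{n(x)-1}$ with $\beta=\frac{n(x)}{1-n(x)}$ and by the slightly different way the original fourth-order problem was rewritten in Case II (the term $(\Delta u,\Delta v)$ appears instead of $k^4(u,v)$, which is why $\beta$ multiplies $y$ but $(1+\beta)$ multiplies $\dv\uphi$, the mirror image of Case I). First I would write down, exactly as in \eqref{eq:stmexpd}, the expanded six-equation system corresponding to $a_\beta(\cdot,\cdot)=\lambda b(\cdot,\cdot)$, so that the roles of the Lagrange multipliers $p,\sigma,r$ and of the auxiliary variables $y=\lambda u$, $\uphi=\nabla u$ are laid bare.

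The equivalence is then proved in two directions. For the forward direction, I would show that any eigenpair $(\lambda,(y,\uphi,u,p,\sigma,r))$ of the mixed system forces $\uphi=\nabla u$ (from the last equation of the expanded system, tested against $H^1_0$), forces $\rot\uphi=0$ hence consistency of $\uphi$ as a gradient, forces $y=\lambda u$ (from the equation carrying $(y,q)$), and forces $u\in H^2_0(\Omega)$ with $u$ solving the linearized primal problem; since Case II is obtained from \eqref{Problem1} by the same reduction used in Section~2.1, recovering the solution of the linearized primal problem is equivalent, via $\phi=\frac{1}{k^2(1-n)}(\Delta+k^2)u$ and $\varphi=\phi-u$, to recovering a solution of \eqref{Problem1} itself, and $k^2=\lambda\neq0$ excludes the spurious eigenvalue. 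For the reverse direction, given a transmission eigenpair of \eqref{Problem1}, form $u=\phi-\varphi\in V$, $\lambda=k^2$, $y=\lambda u$, $\uphi=\nabla u$, and then argue that the remaining three equations of the expanded system determine $(p,\sigma,r)\in L^2(\Omega)\times L^2_0(\Omega)\times H^1_0(\Omega)$ \emph{uniquely}; this is where one invokes, exactly as in the proof of Lemma~\ref{TCompact} transcribed to $T_\beta$, the coercivity of the leading form $A_\beta$ on the kernel $Z$ and the inf-sup condition for $B$, which the author asserts carries over. The $n_b<1$ hypothesis enters only through the sign/boundedness of $\beta$, guaranteeing $A_\beta((y,\uphi,u),(y,\uphi,u))\gtrsim \|y\|_{0}^2+\|\dv\uphi\|_{0}^2+\|\rot\uphi\|_0^2$ with a positive constant, the analogue of the estimate stated just after $A(\cdot,\cdot)$ in that proof.

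The main obstacle is purely the reverse direction's uniqueness of $(p,\sigma,r)$: one must check that the coercivity-plus-inf-sup structure exploited for $a_\alpha$ really does survive the $\alpha\leftrightarrow\beta$ and $1+\alpha\leftrightarrow 1+\beta$ swap, i.e.\ that $(\beta y,y)+2(\beta\dv\uphi,y)+((1+\beta)\dv\uphi,\dv\uphi)$ is still bounded below by a positive multiple of $\|y\|_0^2+\|\dv\uphi\|_0^2$. A short computation using $\frac{n_s}{1-n_s}\leqslant\beta\leqslant\frac{n_b}{1-n_b}$ and $0<n_b<1$ confirms this (the quadratic form in $(y,\dv\uphi)$ has positive-definite coefficient matrix because $\beta>0$ and $(1+\beta)>\beta$), after which the Babu\v{s}ka--Brezzi machinery applies verbatim. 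Everything else — continuity of $a_\beta$ and $b$, compactness of $T_\beta$, the extraction of Cauchy subsequences of $\{\uphi_{i_k}\},\{u_{i_k}\},\{p_{i_k}\}$ — is identical to Case I and can be cited rather than repeated, which is precisely why the author ``omit[s] the proof.''
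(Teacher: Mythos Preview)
Your proposal is correct and follows precisely the approach the paper intends: the paper explicitly omits the proof, stating only that ``the procedure is the same as that for the case $n_s>1$,'' so mirroring the argument of Theorem~\ref{thm:evpequiv} (and the well-posedness from Lemma~\ref{TCompact}) with the $\alpha\leftrightarrow\beta$ and $(1+\alpha)\leftrightarrow(1+\beta)$ swap is exactly what is expected. Your identification of the one nontrivial verification---that the pointwise quadratic form with matrix $\bigl(\begin{smallmatrix}\beta&\beta\\\beta&1+\beta\end{smallmatrix}\bigr)$ remains positive definite because $\beta>0$ under $n_b<1$---is the correct place where the hypothesis enters, and your proof sketch is in fact more detailed than the paper's own (omitted) argument.
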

\section{Discretization}
We discuss the case $n_s>1$ for illustration, and the case $n_b<1$ is the same.
\subsection{Discretisation schemes of \eqref{eq:relaxbig}}
To discretize \eqref{eq:relaxbig}, we have to discretize $L^2$ (twice), $\undertilde{H}{}^1_0(\Omega)$, $H^1_0(\Omega)$ (twice) and $L^2_0(\Omega)$. Let $L^2_{h}(\Omega)\subset L^2(\Omega)$, $H^1_{h0}\subset H^1_0(\Omega)$, $\undertilde{H}{}^{1}_{h0}\subset \undertilde{H}{}^{1}_0$, and $L^2_{h0}\subset L^2_0(\Omega)$ be respective finite element subspaces. Define
\begin{equation}
 V_{h}:=L^2_{h}(\Omega) \times \undertilde{H}{}^1_{h0} \times H^1_{h0}\times L^2_{h}(\Omega) \times L^2_{h0}\times H^1_{h0}.
\end{equation}
We introduce the discretized mixed eigenvalue problem:
%\begin{quote}
find $\lambda_h\in \mathbb{C}$ and  $(y_h,\uphi{}_h,u_h,p_h,\sigma_h,r_h)\in V_{h}$,  such that, for $\forall (z_h,\upsi{}_h,v_h,q_h,\tau_h,s_h)\in V_{h}$,
\begin{equation}\label{eq:saddleevpdis}
a_\alpha((y_h,\uphi{}_h,u_h,p_h,\sigma_h,r_h),(z_h,\upsi{}_h,v_h,q_h,\tau_h,s_h)) = \lambda_h b((y_h,\uphi{}_h,u_h,p_h,\sigma_h,r_h),(z_h,\upsi{}_h,v_h,q_h,\tau_h,s_h)).
\end{equation}

For the well-posedness of the discretized problem, we propose the assumption below.

\paragraph{\bf Assumption AIS} The discrete inf-sup condition holds uniformly that
\begin{equation}\label{eq:assmdisis}
\inf_{q_h\in L^2_{h0}}\sup_{\upsi{}_h\in \undertilde{H}{}^{1}_{h0}}\frac{(\rot\upsi{}_h,q_h)}{\|\nabla_h\upsi{}_h\|_{0,\Omega}\|q_h\|_{0,\Omega}}\geqslant C.
\end{equation}
\begin{remark}
The condition \eqref{eq:assmdisis} is equivalent to the well studied inf-sup condition for the two-dimensional incompressible Stokes problem. It is sufficient to verify that for $\undertilde{H}{}^{1}_{h0}$.
\end{remark}

Associated with $a_\alpha(\cdot,\cdot)$ and $b(\cdot,\cdot)$, we define an operator $T_{\alpha, h}$ by
\begin{multline}\label{eq:defineTh_alpha}
\qquad a_\alpha(T_{\alpha,h}(y,\uphi{},u,p,\sigma,r),(z_h,\upsi{}_h,v_h,q_h,\tau_h,s_h))=b((y,\uphi{},u,p,\sigma,r),(z_h,\upsi{}_h,v_h,q_h,\tau_h,s_h)),\\
\forall\, (z_h,\upsi{}_h,v_h,q_h,\tau_h,s_h)\in V_h.\qquad
\end{multline}
and an operator $P_{\alpha,h}$ by
\begin{multline}
\qquad a_\alpha(P_{\alpha,h}(y,\uphi{},u,p,\sigma,r),(z_h,\upsi{}_h,v_h,q_h,\tau_h,s_h))=a_\alpha((y,\uphi{},u,p,\sigma,r),(z_h,\upsi{}_h,v_h,q_h,\tau_h,s_h)),\\
\forall\, (z_h,\upsi{}_h,v_h,q_h,\tau_h,s_h)\in V_h.\qquad
\end{multline}

Evidently, $T_{\alpha,h}=P_{\alpha,h}T_\alpha$. By the standard theory of finite element methods and by the same virtue of the proof of Lemma \ref{TCompact}, we have the lemma below.
\begin{lemma}
Provided the {\bf Assumption AIS} \eqref{eq:assmdisis},
\begin{enumerate}
\item $P_{\alpha,h}$ is a well-defined idempotent operator from $V$ onto $V_h$;
\item the approximation holds:
%\begin{footnotesize}
\begin{multline*}
\|P_{\alpha,h}(y,\uphi{},u,p,\sigma,r)-(y,\uphi{},u,p,\sigma,r)\|_{V}
\\
\leqslant C\inf_{(z_h,\upsi{}_h,v_h,q_h,\tau_h,s_h)\in V_{,h}}\|(y,\uphi{},u,p,\sigma,r)-(z_h,\upsi{}_h,v_h,q_h,\tau_h,s_h)\|_{V_{h}};
\end{multline*}
%\end{footnotesize}
\item
If $\|P_{\alpha,h}(y,\uphi{},u,p,\sigma,r)-(y,\uphi{},u,p,\sigma,r)\|_{V}\to 0$ as $h\to 0$ for any $(y,\uphi{},u,p,\sigma,r)\in V$, then $\|T_{\alpha,h}-T_\alpha\|_V\to 0$ as $h\to 0$;
\item the operator $T_{\alpha,h}$ is well defined and compact on $V_{h}\subset V$.
\end{enumerate}
\end{lemma}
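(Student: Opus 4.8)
The plan is to verify the four items in turn, in each case reducing to facts already in hand: the boundedness and inf-sup stability of $a_\alpha(\cdot,\cdot)$ on $V$ (established in the proof of Lemma~\ref{TCompact}), the compactness of $T_\alpha$, and the abstract framework of mixed/saddle-point finite element theory. For item (1), the key is to transfer the continuous inf-sup stability of $a_\alpha$ to the discrete level. The bilinear form $a_\alpha$ has the saddle-point block structure exhibited in \eqref{eq:stmexpd}: a coercive-on-the-kernel principal part $A((y,\uphi,u),(z,\upsi,v))$, a constraint form $B$ involving $(p,\sigma,r)$, and the extra $(\rot\upsi,q)$-type coupling. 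The only ingredient of the continuous stability proof that is not automatically inherited by conforming subspaces is the surjectivity of the $B$-type pairings; among these the only nontrivial one is the $\rot$-pairing between $\uH^1_{h0}$ and $L^2_{h0}$, which is precisely \textbf{Assumption AIS}. The coercivity of $A$ on its kernel passes to $V_h$ because conforming subspaces only shrink the kernel, and the other pairings ($-(y,q)$ on $L^2_h\times L^2_h$, and the $H^1_{h0}$-pairings $(\nabla r,\nabla v)$, $(\uphi,\nabla s)$, $(\nabla u,\nabla s)$) are stable by construction. So I would re-run the Babu\v{s}ka--Brezzi verification of Lemma~\ref{TCompact} verbatim with all spaces replaced by their discrete counterparts, invoking \eqref{eq:assmdisis} at the one place it is needed; this yields the uniform discrete inf-sup condition, hence $T_{\alpha,h}$ is well-defined on $V_h$, and $P_{\alpha,h}$ is a well-defined projection onto $V_h$ (idempotency is immediate from the defining relation, since $a_\alpha(P_{\alpha,h}w_h - w_h,\cdot)=0$ on $V_h$ for $w_h\in V_h$).

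For item (2), this is the standard quasi-optimality (C\'ea-type) estimate for projections defined by an inf-sup-stable form: writing $w=(y,\uphi,u,p,\sigma,r)$ and picking any $w_h\in V_h$, the triangle inequality plus the discrete inf-sup bound applied to $P_{\alpha,h}w-w_h\in V_h$ and the Galerkin orthogonality $a_\alpha(P_{\alpha,h}w-w,\cdot)=0$ on $V_h$, together with continuity of $a_\alpha$, give $\|P_{\alpha,h}w-w\|_V\le C\|w-w_h\|_V$; taking the infimum over $w_h\in V_h$ finishes it. For item (3), $T_{\alpha,h}-T_\alpha = (P_{\alpha,h}-\mathrm{Id})T_\alpha$, and since $T_\alpha$ is compact its image of the unit ball is precompact; a uniform-over-compact-sets convergence argument (cover the precompact set by finitely many $\varepsilon$-balls and use the uniform bound $\|P_{\alpha,h}\|\le C$ together with pointwise convergence $\|(P_{\alpha,h}-\mathrm{Id})w\|_V\to0$) upgrades pointwise convergence to norm convergence $\|T_{\alpha,h}-T_\alpha\|_{V\to V}\to0$. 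Item (4): $T_{\alpha,h}$ has finite-dimensional range $V_h$ and is bounded, hence trivially compact on $V_h\subset V$; alternatively it inherits compactness as a norm limit argument is not even needed here.

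The main obstacle is item (1), specifically making sure that the \emph{only} place the discrete stability proof could break is the $\rot$-pairing, so that \textbf{Assumption AIS} really is sufficient. This requires checking that in the continuous proof the constructions ``take $y=-q$'', ``take $\uphi$ with $\rot\uphi=\tau$ and $\|\uphi\|_1\le C\|\tau\|_0$'', and ``take $u$ solving a discrete Poisson problem'' all have discrete analogues with $h$-uniform constants. The first is trivial ($L^2_h$ to $L^2_h$). The third is the $h$-uniform stability of the discrete Laplacian, which is automatic. The second is exactly a right-inverse for $\rot:\uH^1_{h0}\to L^2_{h0}$ with bounded norm, i.e.\ \eqref{eq:assmdisis}; one must also confirm, as the remark after the Assumption notes, that \eqref{eq:assmdisis} is equivalent to the familiar discrete Stokes inf-sup condition (range of $\rot$ on $\uH^1_{h0}$ landing in mean-zero functions), so that standard stable Stokes pairs may be used. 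I would also double-check the compatibility of the two separate copies of $L^2_h$ and of $H^1_{h0}$ appearing in $V_h$, since the form $b(\cdot,\cdot)$ couples $\uphi$, $p$, $u$ across these copies, but this is a bookkeeping matter and does not affect stability. Everything else in the lemma is routine finite element theory, invoked ``by the same virtue of the proof of Lemma~\ref{TCompact}'' as the statement already indicates.
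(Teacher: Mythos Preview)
Your proposal is correct and is precisely the argument the paper intends: the paper offers no separate proof but simply invokes ``the standard theory of finite element methods and \dots the same virtue of the proof of Lemma~\ref{TCompact},'' which is exactly the program you carry out (discrete Babu\v{s}ka--Brezzi verification with \textbf{Assumption AIS} supplying the one nontrivial surjectivity, then C\'ea quasi-optimality, then the compactness-plus-pointwise-convergence upgrade for $(P_{\alpha,h}-\mathrm{Id})T_\alpha$, then finite rank for item~(4)). Your identification of the $\rot$-pairing as the only place discrete stability could fail, and your checks on the remaining pairings, are the appropriate due diligence.
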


\paragraph{\bf Example of finite element spaces}
As the Hood-Taylor pair can guarantee {\bf Assumption AIS}, we will consider the group of Lagrangian elements. Denote by $\mathsf{L}^m_h$ the space of continuous piecewise polynomials of m-th degree, and $\mathsf{L}^m_{h0}=\mathsf{L}^m_h\cap H^1_0(\Omega)$, $\mathring{\mathsf{L}}^m_h=\mathsf{L}^m_h\cap L^2_0(\Omega)$. Define
\begin{equation}\label{eq:defvhm}
V_h^m:= \mathsf{L}^{m-1}_h\times (\mathsf{L}^m_{h0})^2\times \mathsf{L}^m_{h0}\times \mathsf{L}^m_h\times \mathring{\mathsf{L}}^m_h\times \mathsf{L}^m_{h0}.
\end{equation}
Then the discretisation \eqref{eq:saddleevpdis} can be implemented with $V_h^m$, and an m-th order accuracy for eigenfunctions and $(2m)$-th order accuracy for eigenvalues can be expected.

\subsection{Implement the  multi-level scheme}\label{ThComplexEvs}
Computing the first several smallest eigenvalues of \eqref{eq:saddleevpdis} is corresponding to computing the first several biggest eigenvalues of $T_{\alpha,h}$ defined by \eqref{eq:defineTh_alpha}, and is fitting for the framework of \textbf{Algorithm 1}. In this subsection, we adopt the algorithm on the eigenvalue problem \eqref{eq:saddleevpdis}.

Note that the eigenvalue problem (\ref{eq:saddleevpdis}) is non-self-adjoint, and special attention has to be paid onto the complex eigenvalues. We begin with the observation below.
\begin{lemma}
Let $a(\cdot,\cdot)$(non-singular) and $b(\cdot,\cdot)$ be two real bilinear forms on real space $V$. If a complex pair $\mu\sim g$ is such that $a(g,w)=\lambda b(g,w)$ for any $w\in V$, then $a(\bar g,w)=\bar \lambda b(\bar g,w)$ for any $w\in V$.
\end{lemma}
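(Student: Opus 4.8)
The plan is to exploit the reality of the bilinear forms together with the standard fact that complex conjugation is a real-linear, multiplicative involution on $\mathbb{C}$. Since $V$ is a real vector space, for any fixed $w\in V$ the maps $g\mapsto a(g,w)$ and $g\mapsto b(g,w)$ are real-linear functionals; when we extend scalars to $\mathbb{C}$ and allow $g$ to be complex (i.e. $g = g_1 + i g_2$ with $g_1,g_2\in V$), these functionals become complex-linear, and crucially they satisfy $a(\bar g,w) = \overline{a(g,w)}$ and $b(\bar g,w)=\overline{b(g,w)}$ precisely because the coefficients of $a$ and $b$ are real.

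First I would make the complexification explicit: write $g = g_1 + ig_2$, so that $\bar g = g_1 - i g_2$, and for each $w\in V$ expand $a(g,w) = a(g_1,w) + i\,a(g_2,w)$ with $a(g_1,w), a(g_2,w)\in\mathbb{R}$, and similarly for $b$. This immediately gives $a(\bar g,w) = a(g_1,w) - i\,a(g_2,w) = \overline{a(g,w)}$ and $b(\bar g,w) = \overline{b(g,w)}$. Next I would take the hypothesis $a(g,w) = \lambda\,b(g,w)$, valid for all $w\in V$, and apply complex conjugation to both sides: $\overline{a(g,w)} = \bar\lambda\,\overline{b(g,w)}$. Substituting the two identities just derived yields $a(\bar g,w) = \bar\lambda\, b(\bar g,w)$ for every $w\in V$, which is exactly the claim.

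A minor point worth flagging — but not really an obstacle — is the role of the phrase ``non-singular'' for $a(\cdot,\cdot)$: it is not needed to derive the conjugated identity itself, only to guarantee that $\bar g$ is again a genuine eigenvector (i.e.\ that $\bar g\ne 0$ and that $\bar\lambda$ is a genuine eigenvalue rather than a spurious pair annihilating $b$); one notes $\bar g\ne 0$ whenever $g\ne 0$ since conjugation is a bijection on the complexification. Honestly there is no hard part here: the only thing to be careful about is keeping the bookkeeping straight between $V$ (real) and its complexification, and making sure the reader sees that ``$a$ and $b$ are real bilinear forms'' is exactly the hypothesis that makes conjugation pass through the forms. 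I would therefore present the proof in three compact lines: expand into real and imaginary parts, conjugate the eigen-relation, and re-collect.
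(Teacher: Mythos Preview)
Your proof is correct and follows essentially the same approach as the paper: both decompose $g$ into real and imaginary parts, use the reality of $a$ and $b$ to separate the eigen-relation into real equations, and then recombine with the opposite sign on the imaginary part. Your presentation is marginally more streamlined in that you package the key step as the single identity $a(\bar g,w)=\overline{a(g,w)}$ rather than splitting $\lambda$ as well, but the underlying argument is identical.
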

\begin{proof}
Denote $\mu=\mu_r+i\mu_i$ with $\mu_r,\mu_i\in\mathbb{R}$, and $g=g_r+ig_i$ with $g_r,g_i\in V$. Then
$$
a(g_r+ig_i,w)=(\mu_r+i\mu_i)b(g_r+ig_i,w)=b(\mu_rg_r-\mu_ig_i,w)+ib(\mu_i g_r+\mu_rg_i,w),
$$
namely
$$
a(g_r,w)=b(\mu_rg_r-\mu_ig_i,w),\quad a(g_i,w)=b(\mu_i g_r+\mu_rg_i,w),
$$
futher, we can obtain
$$
a(g_r-ig_i,w)=b(\mu_rg_r-\mu_ig_i,w)-ib(\mu_i g_r+\mu_rg_i,w)=(\mu_r-i\mu_i)b(g_r-ig_i,w).
$$
The proof is completed.
\end{proof}

In the practical implementation of the algorithm, the finite element spaces on coarse grid will always be enhanced with an approximated eigenfunction and its conjugate vector. This can be realised by enhancing the space with the real and imaginary parts of the vectors respectively.

\section{Numerical experiments}

As in practice, the $n_s>1$ case is of dominant interest \cite{ColtonKress}. In this section, we focus ourselves on this one. The case $n_b<1$ follows similarly. Numerical experiments are conducted on a convex domain (a triangle domain $\Omega_1$, left of Figure \ref{fig:initial_mesh2}) and a non-convex domain (a reshaped L-shaped domain $\Omega_2$, right of Figure \ref{fig:initial_mesh2}). Note that neither domain can be covered by rectangular grids.

We discretize \eqref{eq:relaxbig} with $V_h^m, m=2,3$,  defined as \eqref{eq:defvhm}. Both single- and multi-level algorithms are tested. The initial mesh for $V_h^2$ is $h_0\approx1/8$ (as showed in Figure \ref{fig:initial_mesh2}), while the initial mesh for $V_h^3$ is $h_0\approx1/4$. A series of nested grids $\{\mathcal{T}_{h_i}\}_{i=0}^4$ are constructed by regular bisection refinements with $h_i\approx h_0(1/2)^i$.

\begin{figure}
\centering
\subfigure{\includegraphics[width=0.45\textwidth,height=0.4\textwidth]{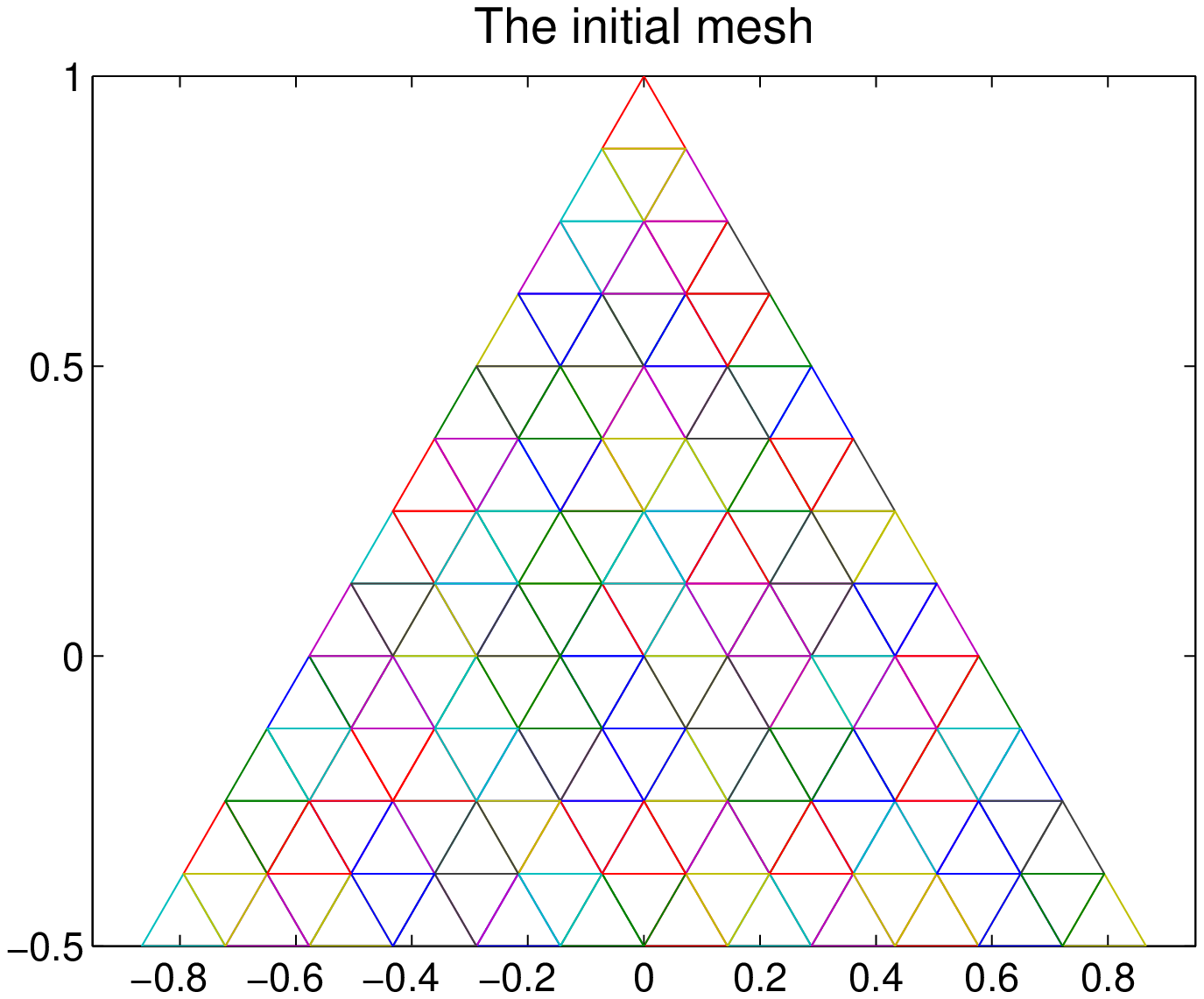}}
\subfigure{\includegraphics[width=0.45\textwidth,height=0.4\textwidth]{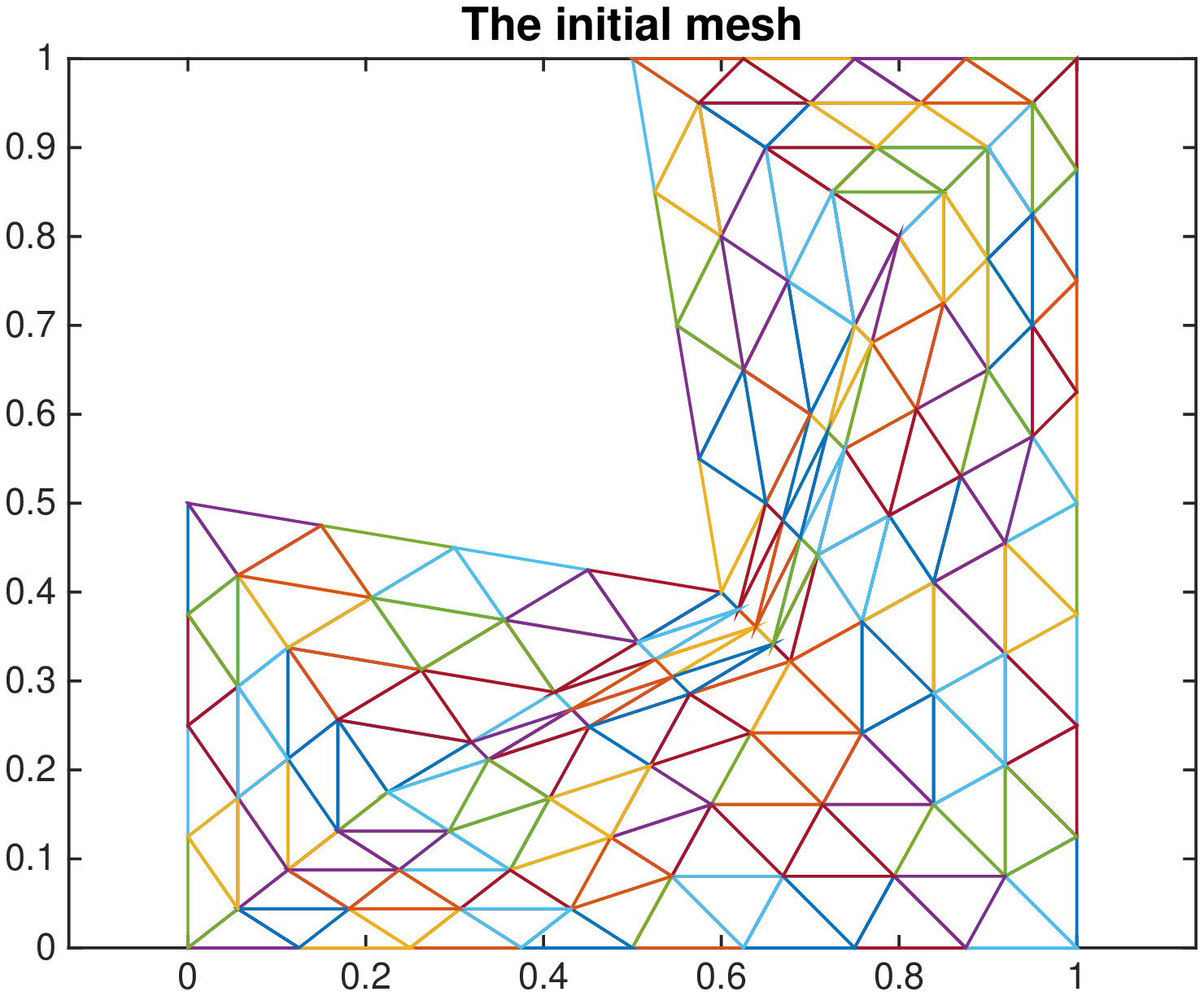}}
\caption{The initial mesh, left: triangle domain ($\Omega_1$), right: the reshaped L-shaped domain ($\Omega_2$).}
\label{fig:initial_mesh2}
\end{figure}

For each series of meshes and every scheme, we obtain the eigenvalue series $\{\lambda_{h_i}\}$ and eigenfunction series $\{(y_{h_i},\uphi{}_{h_i},u_{h_i},p_{h_i},\sigma_{h_i},r_{h_i})\}$. The convergent orders are computed by

\begin{equation}\label{ord_lambda}
\mbox{eigenvalue:}\quad log_2(|\frac{\lambda_{h_4}-\lambda_{h_{i-1}}}{\lambda_{h_4}-\lambda_{h_i}}|),~~~~i=1,2,3.
\end{equation}
\begin{equation}\label{ord_u}
\mbox{component\ $u$ of eigenfunction:}\quad log_2(||\frac{u_{h_4}-u_{h_{i-1}}}{u_{h_4}-u_{h_i}}||_{H^1}),~~~~i=1,2,3, \ \ \mbox{and\ the\ same\ for}\ \uphi.
\end{equation}

\subsection{A summary of numerical experiments}
In the paper, we consider the following examples.

Example 1:  $\Omega_1$ with the index of refraction $n(x)=24$.

Example 2:  $\Omega_1$ with the index of refraction $n(x)=x_1^2+x_2^2+4$.

Example 3:  $\Omega_2$ with the index of refraction $n(x)=16$.

Example 4:  $\Omega_2$ with the index of refraction $n(x)=x_1^2+x_2^2+4$.

We use $V_h^m, m=2,3$, to discretize the problem. For every example, the lowest six eigenvalues are listed in a sequence with the order $``\eqslantless"$. Tables \ref{tab:evs_p2p1_direct} and \ref{tab:evs_p3p2_direct} summarize the results of the single-level $V_h^m$ schemes on the finest meshes. Tables \ref{tab:evs_p2p1_multi} and \ref{tab:evs_p3p2_multi} summarize the results of the multi-level $V_h^m$ schemes on the finest meshes.

From experiments, we can verify the following results.
\begin{enumerate}
\item The performances of the discretization schemes are consistent to the theory.
\item The multi-level algorithms play the same as the corresponding single-level ones. So in Section \ref{NumericalConOrder}, we just give the results for multi-level algorithms.
\item For the convex domain, when the mesh size is small enough, the series of computed {\bf real} eigenvalues tends to decrease monotonously. Namely, a guaranteed upper bound of the eigenvalue can be expected to be computed by the single-level and multi-level algorithms.
\end{enumerate}
 We discuss the convergence behavior of the eigenvalues and eigenfunctions ($u_h$ and $\uphi{}_{h}$) in Section \ref{NumericalConOrder}.

\begin{table}[htp]

\centering
\caption{\label{tab:evs_p2p1_direct} The transmission eigenvalues on the finest mesh with single-level $V_h^2$ scheme ($\lambda=k^2$).}
\begin{tabular}{l|l|l|l}\hline
$\Omega$&$n(x)$&DOFs&The first six eigenvalues\\\hline
\multirow{2}*{$\Omega_1$}&\multirow{2}*{24}&\multirow{2}*{295511}&$\lambda_1=2.1389, \quad\lambda_2=3.4375, \quad\lambda_3=3.4375$, \\%\hline
                                            &&&$\lambda_4=5.3173,  \quad\lambda_5=5.3173,  \quad\lambda_6=5.4636$.\\\hline
\multirow{2}*{$\Omega_1$}&\multirow{2}*{$x_1^2+x_2^2+4$}&\multirow{2}*{295511}&$\lambda_1=15.2871-9.2904i,  \quad\lambda_3=25.3979,  \quad\lambda_5=27.9052$, \\
                                            &&&$\lambda_2=15.2871+9.2904i,  \quad\lambda_4=25.3979, \quad\lambda_6=35.5305$.\\\hline
\multirow{2}*{$\Omega_2$}&\multirow{2}*{16}&\multirow{2}*{465671}&$\lambda_1=12.8210,  \quad\lambda_2=14.0055,  \quad\lambda_3=14.5524$, \\
                                            &&&$\lambda_4=15.2335,  \quad\lambda_5=16.7663,  \quad\lambda_6=19.0633$.\\\hline
\multirow{2}*{$\Omega_2$}&\multirow{2}*{$x_1^2+x_2^2+4$}&\multirow{2}*{465671}&$\lambda_1=55.1035,  \quad\lambda_3=45.6601-40.7527i, \quad\lambda_5=56.0094-34.4999i$, \\
&&&$\lambda_2=56.4701, \quad\lambda_4=45.6601+40.7527i,  \quad\lambda_6=56.0094+34.4999i$.\\\hline
\end{tabular}
\end{table}

\begin{table}[htp]

\centering
\caption{\label{tab:evs_p3p2_direct} The transmission eigenvalues on the finest mesh with single-level $V_h^3$ scheme ($\lambda=k^2$).}
\begin{tabular}{l|l|l|l}\hline
$\Omega$&$n(x)$&DOFs&The first six eigenvalues\\\hline
\multirow{2}*{$\Omega_1$}&\multirow{2}*{24}&\multirow{2}*{165175}&$\lambda_1=2.1389,  \quad\lambda_2=3.4375,  \quad\lambda_3=3.4375$, \\
&&&$\lambda_4=5.3172, \quad \lambda_5=5.3172, \quad \lambda_6=5.4636$.\\\hline
\multirow{2}*{$\Omega_1$}&\multirow{2}*{$x_1^2+x_2^2+4$}&\multirow{2}*{165175}&$\lambda_1=15.2871-9.2904i, \quad \lambda_3=25.3979, \quad \lambda_5=27.9052$, \\
&&&$\lambda_2=15.2871+9.2904i,  \quad \lambda_4=25.3979,  \quad \lambda_6=35.5305$.\\\hline
\multirow{2}*{$\Omega_2$}&\multirow{2}*{16}&\multirow{2}*{294151}&$\lambda_1=12.8215,  \quad\lambda_2=14.0055, \quad\lambda_3=14.5521$, \\
&&&$\lambda_4=15.2338, \quad \lambda_5=16.7665, \quad \lambda_6=19.0632$.\\\hline
\multirow{2}*{$\Omega_2$}&\multirow{2}*{$x_1^2+x_2^2+4$}&\multirow{2}*{294151}&$\lambda_1=55.1054, \quad \lambda_3=45.6624-40.7568i,  \quad \lambda_5=56.0097-34.4995i$, \\
&&&$\lambda_2=56.4720, \quad\lambda_4=45.6624+40.7568i, \quad\lambda_6=56.0097+34.4995i$.\\\hline
\end{tabular}
\end{table}

\begin{table}[htp]
\centering
\caption{\label{tab:evs_p2p1_multi} The transmission eigenvalues on the finest mesh with multi-level $V_h^2$ scheme ($\lambda=k^2$).}
\begin{tabular}{l|l|l|l}\hline
$\Omega$&$n(x)$&DOFs&The first six eigenvalues\\\hline
\multirow{1}*{$\Omega_1$}&\multirow{2}*{24}&\multirow{2}*{295511}&$\lambda_1=2.1389,  \quad\lambda_2=3.4375,  \quad\lambda_3=3.4375$, \\%\hline
                                            &&&$\lambda_4=5.3173,  \quad\lambda_5=5.3173,  \quad\lambda_6=5.4636$.\\\hline
\multirow{1}*{$\Omega_1$}&\multirow{2}*{$x_1^2+x_2^2+4$}&\multirow{2}*{295511}&$\lambda_1=15.2871-9.2904i, \quad\lambda_3=25.3979, \quad\lambda_5=27.9052$, \\
                                            &&&$\lambda_2=15.2871+9.2904i, \quad\lambda_4=25.3979, \quad\lambda_6=35.5305$.\\\hline
\multirow{1}*{$\Omega_2$}&\multirow{2}*{16}&\multirow{2}*{465671}&$\lambda_1=12.8210,  \quad\lambda_2=14.0055, \quad\lambda_3=14.5524$, \\
                                            &&&$\lambda_4=15.2335,  \quad\lambda_5=16.7663, \quad \lambda_6=19.0633$.\\\hline
\multirow{1}*{$\Omega_2$}&\multirow{2}*{$x_1^2+x_2^2+4$}&\multirow{2}*{465671}&$\lambda_1=55.1035,  \quad\lambda_3=45.6601-40.7527i, \quad\lambda_5=56.0094-34.4999i$, \\
&&&$\lambda_2=56.4701,  \quad\lambda_4=45.6601+40.7527i, \quad\lambda_6=56.0094+34.4999i$.\\\hline
\end{tabular}
\end{table}

\begin{table}[htp]
\centering
\caption{\label{tab:evs_p3p2_multi} The transmission eigenvalues on the finest mesh with multi-level $V_h^3$ scheme ($\lambda=k^2$).}
\begin{tabular}{l|l|l|l}\hline
$\Omega$&$n(x)$&DOFs&The first six eigenvalues\\\hline
\multirow{2}*{$\Omega_1$}&\multirow{2}*{24}&\multirow{2}*{165175}&$\lambda_1=2.1389,\quad \lambda_2=3.4375,\quad \lambda_3=3.4375$, \\
&&&$\lambda_4=5.3172, \quad \lambda_5=5.3172, \quad \lambda_6=5.4636$.\\\hline
\multirow{2}*{$\Omega_1$}&\multirow{2}*{$x_1^2+x_2^2+4$}&\multirow{2}*{165175}&$\lambda_1=15.2871-9.2904i, \quad\lambda_3=25.3979, \quad\lambda_5=27.9052$, \\
&&&$\lambda_2=15.2871+9.2904i, \quad\lambda_4=25.3979,  \quad\lambda_6=35.5305$.\\\hline
\multirow{2}*{$\Omega_2$}&\multirow{2}*{16}&\multirow{2}*{294151}&$\lambda_1=12.8215,  \quad\lambda_2=14.0055,  \quad\lambda_3=14.5521$, \\
&&&$\lambda_4=15.2338, \quad\lambda_5=16.7665,  \quad\lambda_6=19.0632$.\\\hline
\multirow{2}*{$\Omega_2$}&\multirow{2}*{$x_1^2+x_2^2+4$}&\multirow{2}*{294151}&$\lambda_1=55.1054, \quad\lambda_3=45.6624-40.7568i, \quad\lambda_5=56.0097-34.4995i$, \\
&&&$\lambda_2=56.4720, \quad\lambda_4=45.6624+40.7568i, \quad\lambda_6=56.0097+34.4995i$.\\\hline
\end{tabular}
\end{table}

\subsection{Discussion about complex eigenvalues}
For transmission eigenvalue problem (\ref{Problem1}), the non-self-adjointness admits the existence of complex eigenvalues and complex eigenfunctions. The same situation comes across for the discretizations. As real transmission eigenvalues will deserve bigger attention, a question is concerning if a real transmission eigenvalue will be missed in the computation. Experiments show that the sequences of computed complex eigenvalues tend to a complex limit away from the real ax; namely, a {\bf real} transmission eigenvalue can not be approximated by a series of {\bf complex} computed transmission eigenvalues. Therefore, in practical computation, we can adopt such algorithms that focus on the computation of real eigenvalues rather than on all eigenvalues, which may bring convenience.

\subsection{Convergence behavior of the numerical experiments by multi-level algorithm}
\label{NumericalConOrder}
In the following figures, 'p2p1' denotes $V_h^2$ discretization, and 'p3p2' denotes $V_h^3$ discretization. The key feature of the stability of the finite element spaces ({\bf Assumption AIS}) is this way emphasized.  All the results are obtained by the multi-level algorithm.

\subsubsection{Example 1}

Figure \ref{fig:multi_tri24_evs} gives the convergence rates for eigenvalues. The convergence rates for $V_h^2$ are 4 and for $V_h^3$ are 6 which are both optimal. Figure \ref{fig:multi_tri24_efsu} and \ref{fig:multi_tri24_efsphi} give the convergence rates for the eigenfunction components $u_h$ and $\uphi{}_h$ respectively. The convergence rates are 2 for $V_h^2$ and 3 for $V_h^3$ which are consistent with the theoretical expectation. Both single-level and multi-level algorithms give upper bounds for real eigenvalues.

%\begin{figure}
%\centering
%\includegraphics[width=0.75\textwidth,height=0.5\textwidth]{Trans_tri24_direct_Evs_2Methods.eps}
%\caption{The convergence rates for the lowest six eigenvalues for Example 1 by single-level algorithm, X-axis means the size of mesh and Y-axis means $\vert\lambda_{h_i}-\lambda_{h_4}\vert$.}
%\label{fig:direct_tri24_evs}
%\end{figure}

\begin{figure}
\centering
\includegraphics[width=0.75\textwidth,height=0.5\textwidth]{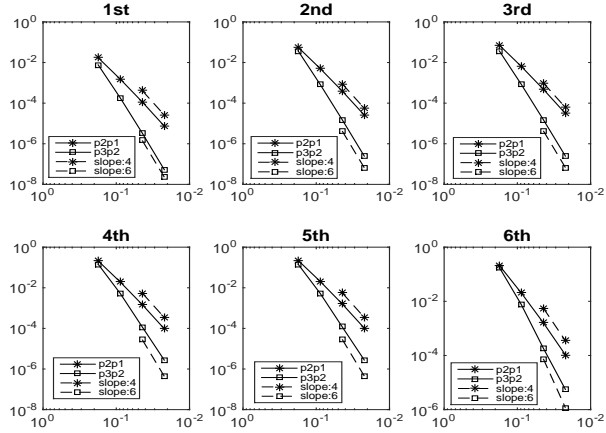}
\caption{The convergence rates for the lowest six eigenvalues for Example 1 by multi-level algorithm, X-axis means the size of mesh and Y-axis means $\vert\lambda_{h_i}-\lambda_{h_4}\vert$.}
\label{fig:multi_tri24_evs}
\end{figure}

\begin{figure}
\centering
\includegraphics[width=0.75\textwidth,height=0.5\textwidth]{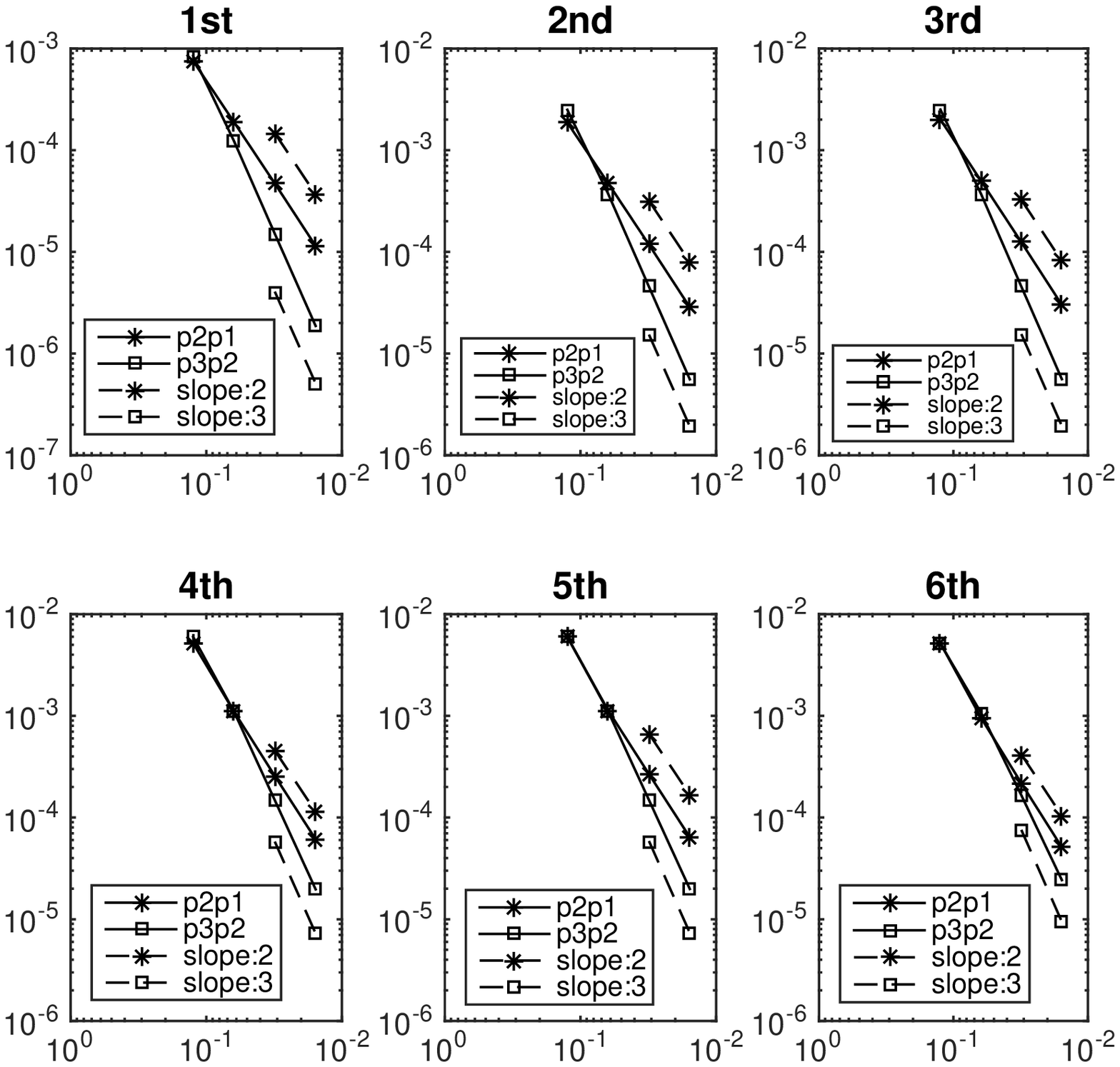}
\caption{The convergence rates for $u_h$ for Example 1 by multi-level algorithm, X-axis means the size of mesh and Y-axis means $||u_{h_i}-u_{h_4}||_{H^1}$.}
\label{fig:multi_tri24_efsu}
\end{figure}

\begin{figure}
\centering
\includegraphics[width=0.75\textwidth,height=0.5\textwidth]{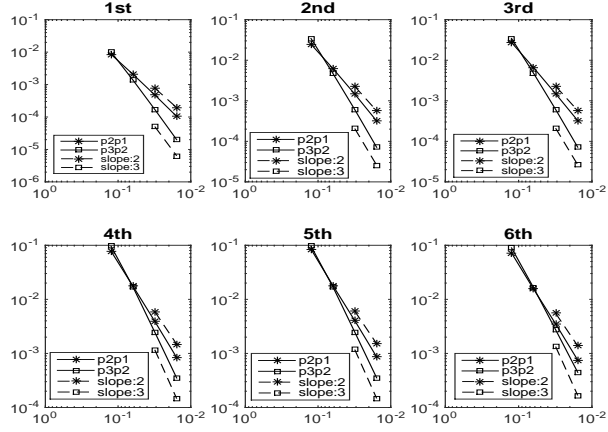}
\caption{The convergence rates for the second component of eigenfunction for Example 1 by multi-level algorithm.}
\label{fig:multi_tri24_efsphi}
\end{figure}

\subsubsection{Example 2}
%The triangle domain $\Omega_3$ is still considerd with the index of refraction $n(x)=x_1^2+x_2^2+4$.

Figure \ref{fig:multi_triNon2_evs} gives the  convergence rates for eigenvalues which are also optimal.
And the optimal convergence rates for eigenfunction components $u_h$ and $\uphi{}_h$ are also obtained as in Figures \ref{fig:multi_triNon2_efsu} and \ref{fig:multi_triNon2_efsphi}. Again, both algorithms give upper bounds for real eigenvalues.

%\begin{figure}
%\centering
%\includegraphics[width=0.75\textwidth,height=0.5\textwidth]{Trans_triNon2_direct_6Evs_2Methods.eps}
%\caption{The convergence rates for the lowest six eigenvalues for Example 2 by single-level algorithm, X-axis means the size of mesh and Y-axis means $\vert\lambda_{h_i}-\lambda_{h_4}\vert$.}
%\label{fig:direct_triNon2_evs}
%\end{figure}

\begin{figure}
\centering
\includegraphics[width=0.75\textwidth,height=0.5\textwidth]{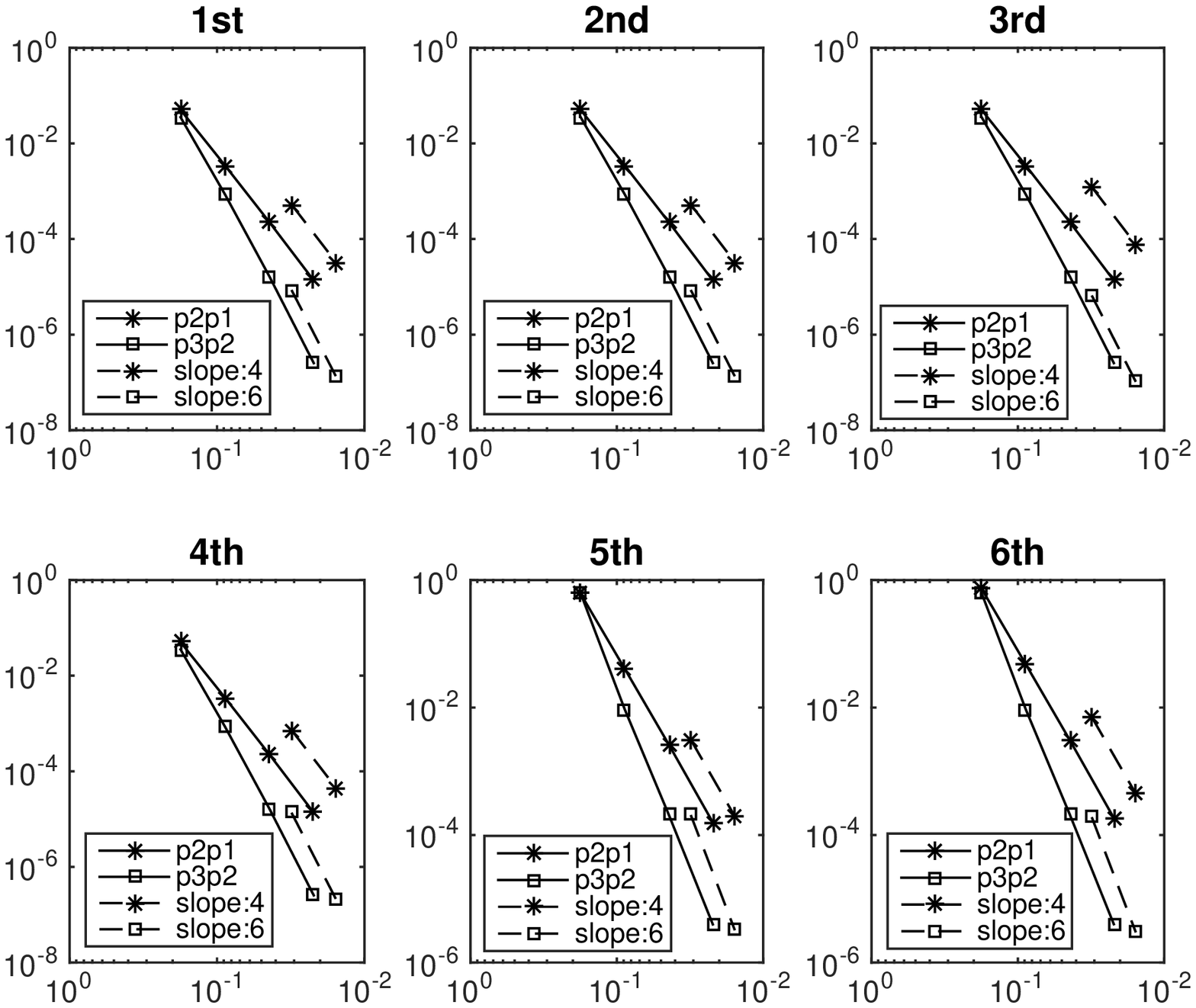}
\caption{The convergence rates for the lowest six eigenvalues for Example 2 by multi-level algorithm, X-axis means the size of mesh and Y-axis means $\vert\lambda_{h_i}-\lambda_{h_4}\vert$.}
\label{fig:multi_triNon2_evs}
\end{figure}

\begin{figure}
\centering
\includegraphics[width=0.75\textwidth,height=0.5\textwidth]{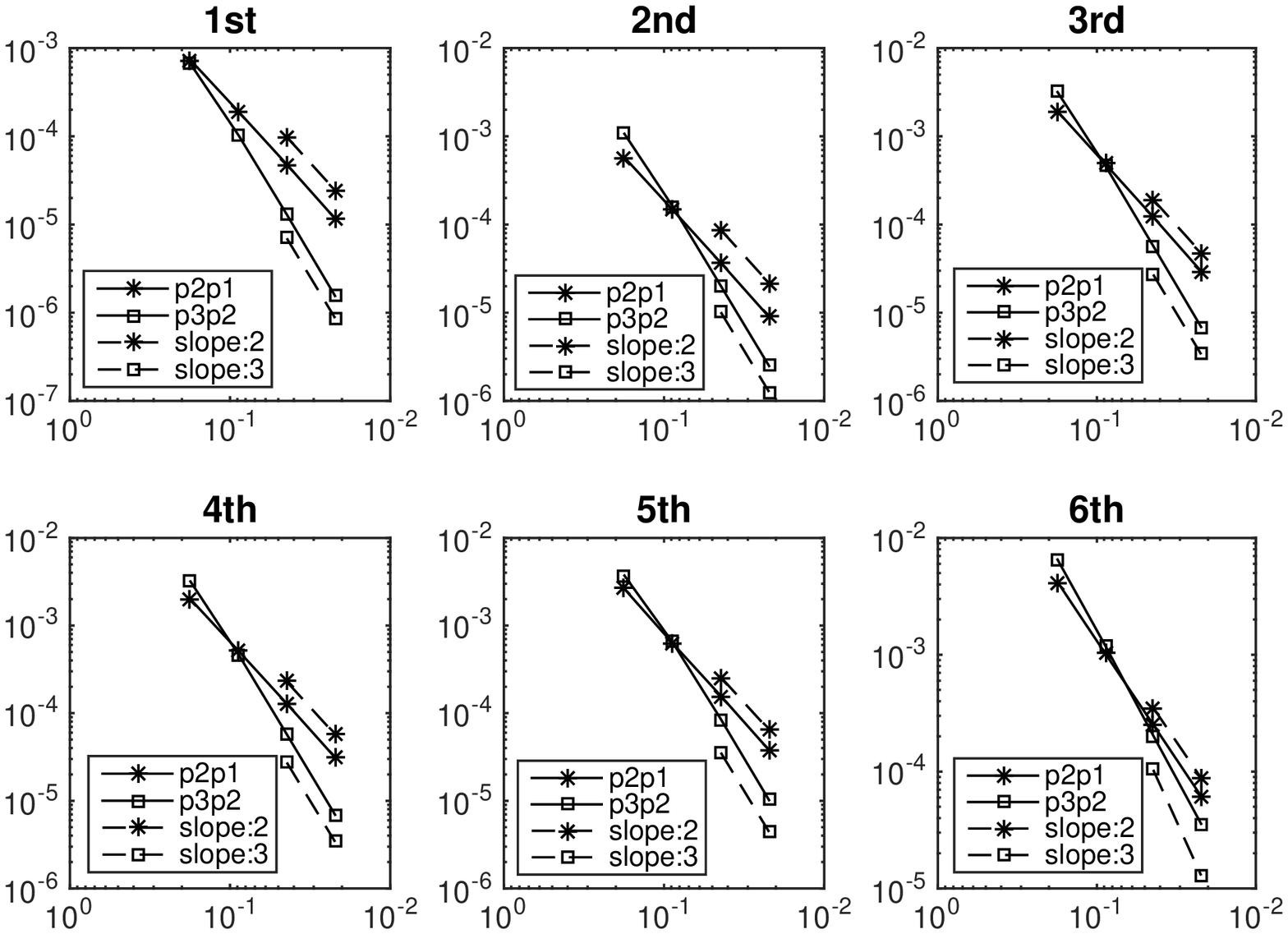}
\caption{The convergence rates for $u_h$ for Example 2 by multi-level algorithm, X-axis means the size of mesh and Y-axis means $||u_{h_i}-u_{h_4}||_{H^1}$.}
\label{fig:multi_triNon2_efsu}
\end{figure}

\begin{figure}
\centering
\includegraphics[width=0.75\textwidth,height=0.5\textwidth]{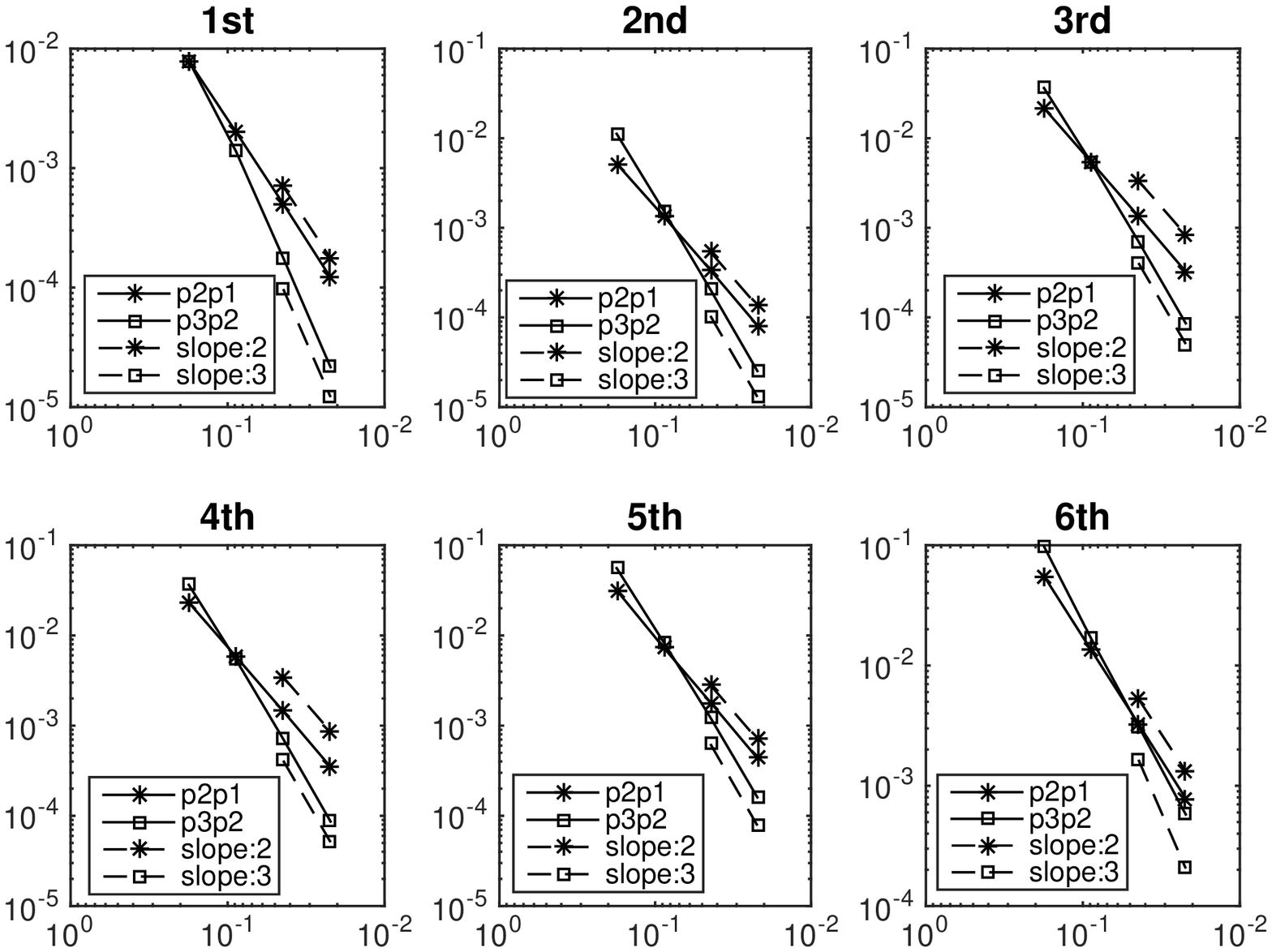}
\caption{The convergence rates for the second component of eigenfunction for Example 2 by multi-level algorithm..}
\label{fig:multi_triNon2_efsphi}
\end{figure}

\subsubsection{Example 3}
%We choose the reshaped L-shaped domain $\Omega_4$ and the index of refraction $n(x)=16$.

Figure \ref{fig:multi_reL16_evs} gives the convergence rates for eigenvalues by multi-level scheme. The rates are not optimal due to the low regularity.
Figures \ref{fig:multi_reL16_efUs}, \ref{fig:multi_reL16_efPhis} give the convergence rates for the eigenfunction components $u_h$ and $\uphi{}_h$, respectively.

\begin{figure}
\centering
\includegraphics[width=0.75\textwidth,height=0.5\textwidth]{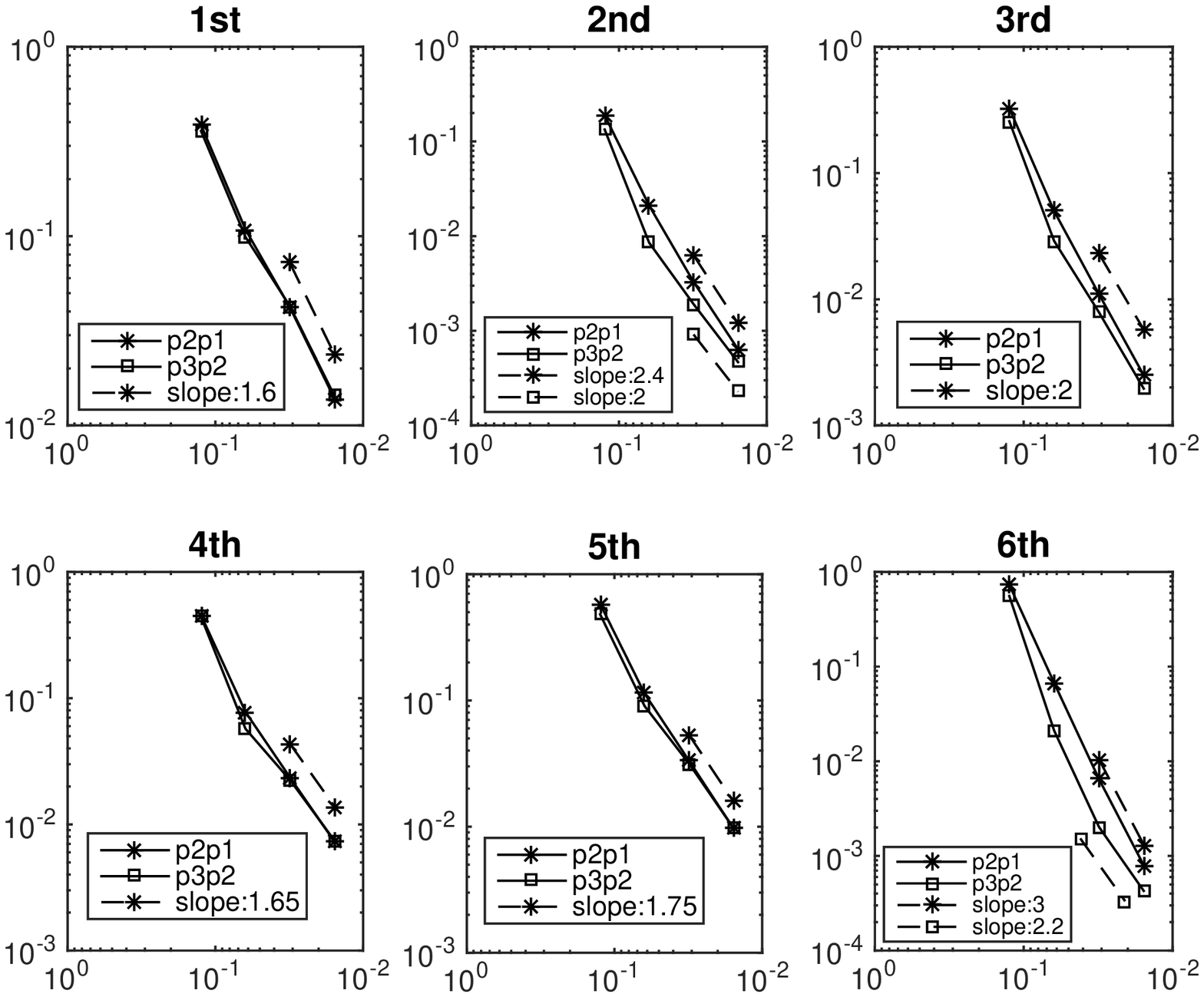}
\caption{The convergence rates for the lowest six eigenvalues for Example 3 by multi-level algorithm, X-axis means the size of mesh and Y-axis means $\vert\lambda_{h_i}-\lambda_{h_4}\vert$.}
\label{fig:multi_reL16_evs}
\end{figure}

\begin{figure}
\centering
\includegraphics[width=0.75\textwidth,height=0.5\textwidth]{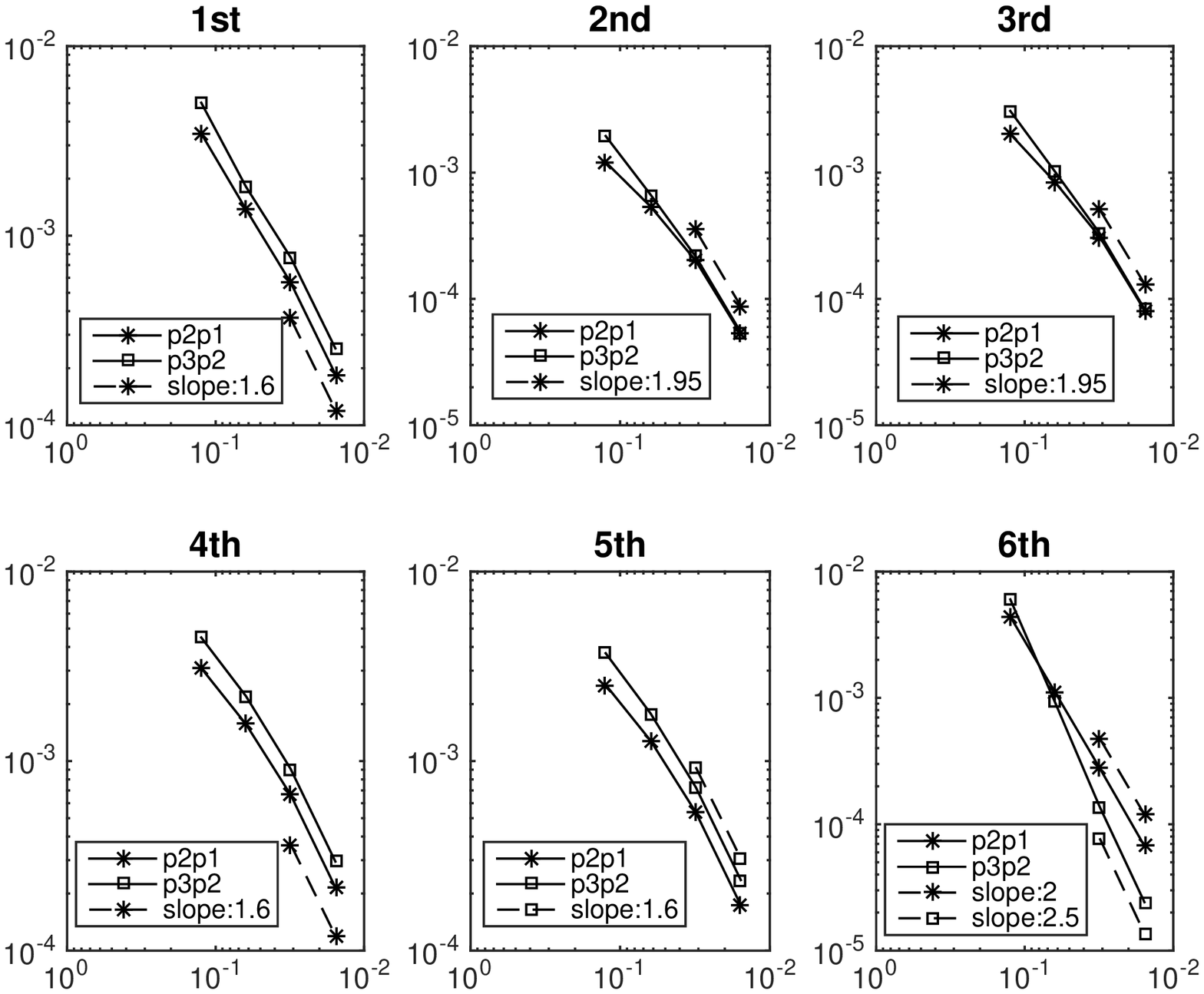}
\caption{The convergence rates for $u_h$ for Example 3 by multi-level algorithm, X-axis means the size of mesh and Y-axis means $||u_{h_i}-u_{h_4}||_{H^1}$.}
\label{fig:multi_reL16_efUs}
\end{figure}

\begin{figure}
\centering
\includegraphics[width=0.75\textwidth,height=0.5\textwidth]{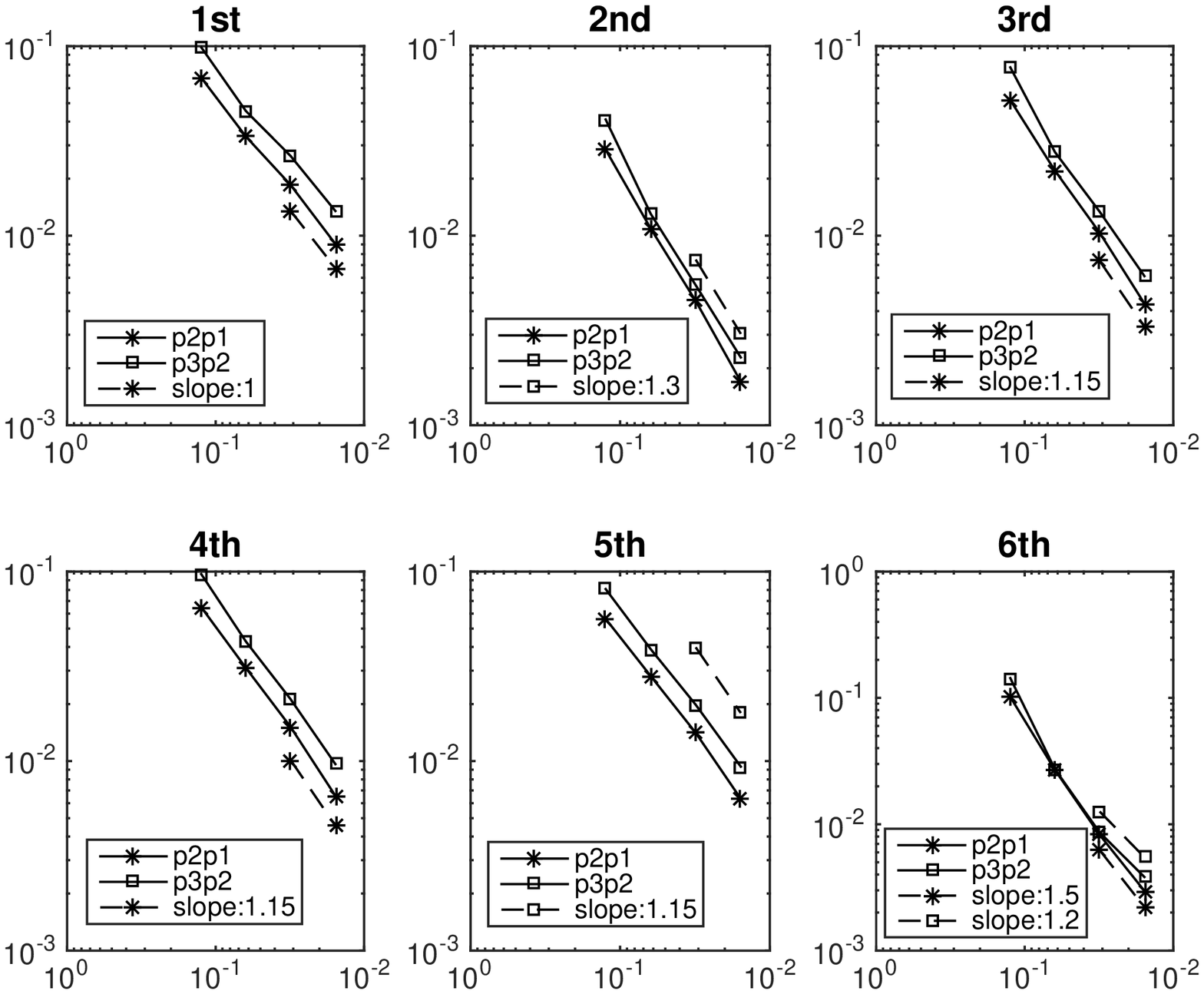}
\caption{The convergence rates for the second component of eigenfunction for Example 3 by multi-level algorithm.}
\label{fig:multi_reL16_efPhis}
\end{figure}

\subsubsection{Example 4}
Figure \ref{fig:multi_reLNon2_evs} gives the convergence rates for eigenvalues by multi-level scheme. The rates are still not optimal.
Figures \ref{fig:multi_reLNon2_efUs}, \ref{fig:multi_reLNon2_efPhis} give the convergence rates for the eigenfunction components $u_h$ and $\uphi{}_h$, respectively.
\begin{figure}
\centering
\includegraphics[width=0.75\textwidth,height=0.5\textwidth]{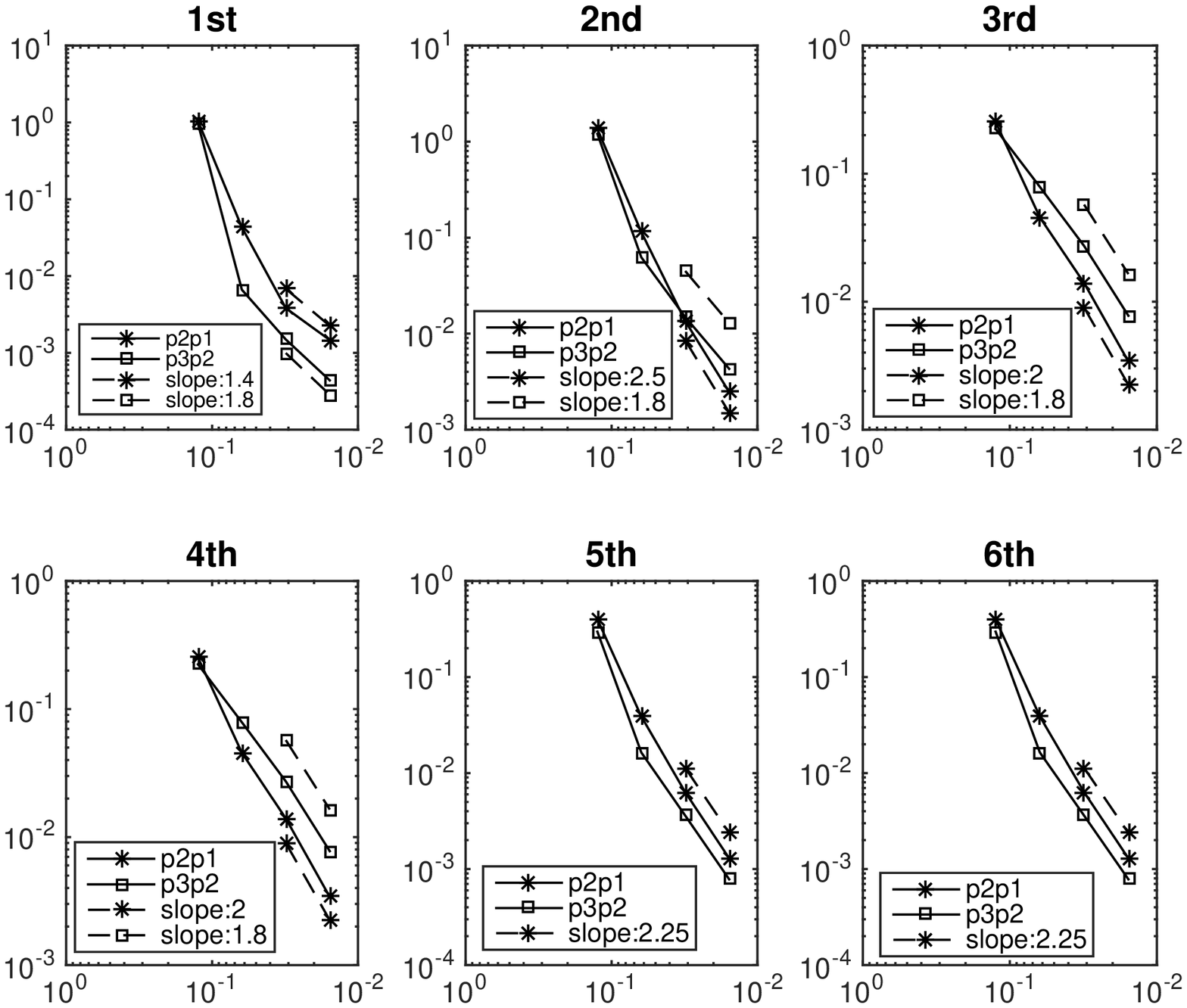}
\caption{The convergence rates for the lowest six eigenvalues for Example 4 by multi-level algorithm, X-axis means the size of mesh and Y-axis means $\vert\lambda_{h_i}-\lambda_{h_4}\vert$.}
\label{fig:multi_reLNon2_evs}
\end{figure}

\begin{figure}
\centering
\includegraphics[width=0.75\textwidth,height=0.5\textwidth]{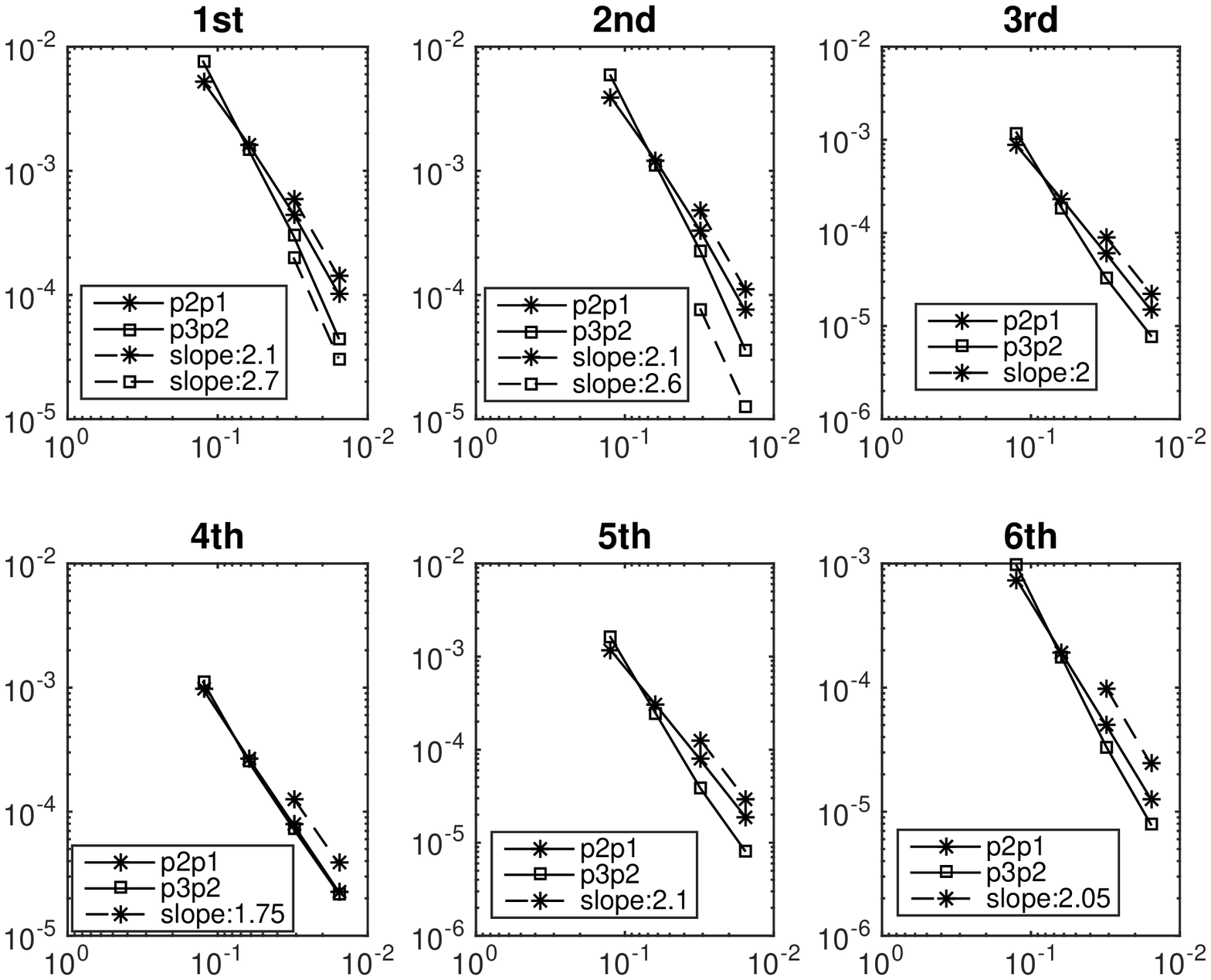}
\caption{The convergence rates for $u_h$ for Example 4 by multi-level algorithm, X-axis means the size of mesh and Y-axis means $||u_{h_i}-u_{h_4}||_{H^1}$.}
\label{fig:multi_reLNon2_efUs}
\end{figure}

\begin{figure}
\centering
\includegraphics[width=0.75\textwidth,height=0.5\textwidth]{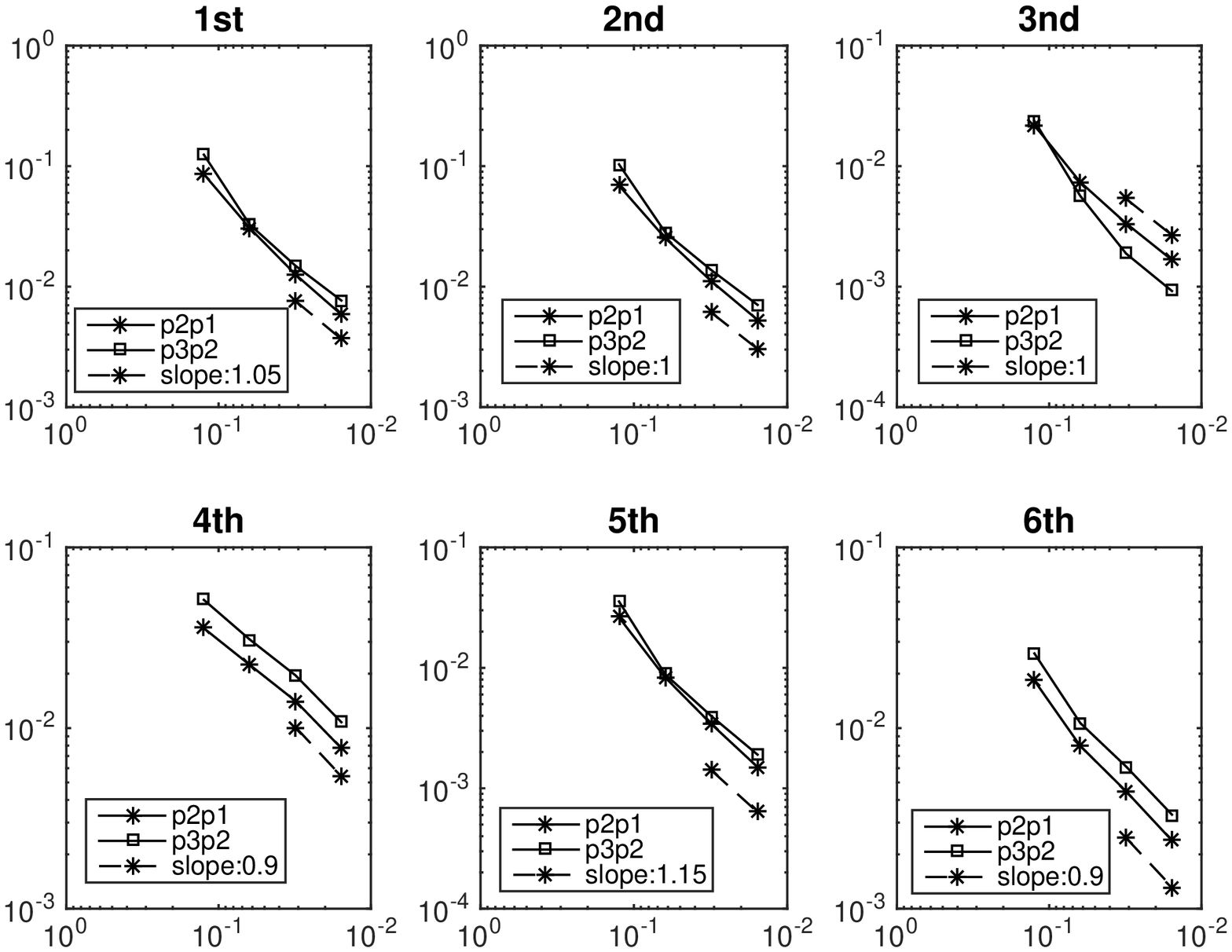}
\caption{The convergence rates for the second component of eigenfunction for Example 4 by multi-level algorithm.}
\label{fig:multi_reLNon2_efPhis}
\end{figure}

\section{Concluding remarks}
In this paper, we discuss the transmission eigenvalue problem discretized by a mixed finite element scheme. The proposed mixed FEM is equivalent to the primal eigenvalue problem. At the continuous level, it doesn't bring in any spurious eigenvalue. The usage of triangular finite elements enables us to deal with arbitrary polygon domain. Particularly, in this paper, we choose conforming Lagrangian FEM for the convenience on constructing a multi-level scheme to improve the efficiency. We remark that, concerning the discretisation alone, nonconforming low-order finite elements may provide different options with interesting properties.

In this paper, we are concerned with the simply-connected two dimensional domains. Principally, similar discussions can be carried on domains with multiply-connected feature and in three dimension. This can be discussed in future. The analogous generalisation of the schemes to other kinds of transmission eigenvalue problems seems natural, such as the elastic transmission eigenvalue problem, which can be discussed in future.   Finally we remark that transmission eigenvalue problems with anisotropic index of refraction would also be of our interest in future.

%\section*{Acknowledgements}
%X. Ji is partially supported by the National Natural Science Foundation of China with Grant Nos. 11271018 and 91630313, and National Centre for Mathematics and Interdisciplinary Sciences, Chinese Academy of Sciences. S. Zhang is partially supported by the National Natural Science Foundation of China with Grant No. 11471026 and National Centre for Mathematics and Interdisciplinary Sciences, Chinese Academy of Sciences.

%\label{}

%% The Appendices part is started with the command \appendix;
%% appendix sections are then done as normal sections
%% \appendix

%% \section{}
%% \label{}

%% If you have bibdatabase file and want bibtex to generate the
%% bibitems, please use
%%
%%  \bibliographystyle{elsarticle-num}
%%  \bibliography{<your bibdatabase>}

%% else use the following coding to input the bibitems directly in the
%% TeX file.

%\begin{thebibliography}{00}
%
%%% \bibitem{label}
%%% Text of bibliographic item
%
%\bibitem{}
%\section*{References}

\end{document}